\title{Feedback Stabilization of Tank-Liquid System with Robustness to Surface Tension} 
\author{
Iasson Karafyllis, Filippos Vokos\\
Department of Mathematics,\\ National Technical University of Athens, \\
 Zografou Campus, 15780, Athens, Greece, \\
emails: iasonkar@central.ntua.gr, fivojean@mail.ntua.gr \and Miroslav Krstic \\
Department of Mechanical and Aerospace Eng.,  \\ University of California, 
La Jolla, San Diego, \\ CA 92093-0411, USA 
email: krstic@ucsd.edu}
\begin{document}
\maketitle
\begin{abstract}
%% Text of abstract
We construct a robust stabilizing feedback law for the viscous Saint-Venant system of Partial Differential Equations (PDEs) with surface tension and without wall friction. The Saint-Venant system describes the movement of a tank which contains a viscous liquid. We assume constant contact angles between the liquid and the walls of the tank and we achieve a spill-free exponential stabilization with robustness to surface tension by using a Control Lyapunov Functional (CLF). The proposed CLF provides a  parameterized family of sets which approximate the state space from the interior. Based on the CLF, we construct a nonlinear stabilizing feedback law which ensures that the closed-loop system  converges exponentially to the desired equilibrium point in the sense of an appropriate norm.
\end{abstract}

\section{Introduction}\label{section1}
The Saint-Venant model, which was derived in \cite{A}, constitutes a significant and very influential mathematical model in fluid mechanics. It is also referred in literature  as the shallow water model. Recent extensions of the Saint-Venant model take into account various types of forces such as viscous stresses, surface tension and friction forces (see \cite{C,D,12,F,G,4}).

The feedback stabilization problem of the Saint-Venant model is a challenging problem. The dominant cases studied in the litterature include the inviscid model - which ignores forces such as viscous stresses and surface tension - and the linearized model (see \cite{Y1,a,b,c,d,e,f,X1,g,h,i,k,l,m}). In \cite{d,e,X1,l,m} the problem of the movement of an 1-D tank which contains a fluid is studied. More specifically, \cite{d,e,X1} provide controllability results for the Saint-Venant model without viscosity, without friction and without surface tension, while \cite{l} suggests a new variational formulation of Saint-Venant equations and proves the steady-state controllability of the linear approximations of several control configurations. In \cite{m} the inviscid Saint-Venant model is studied and appropriate stabilizing full-state feedback and output feedback control laws are constructed. In \cite{a,b,c,e,f,g,h,i,k} the movement of a fluid in an open channel is studied. Stabilization results are provided in \cite{Y1,a,b,f,h,i}. In \cite{a,b,h,i,k} the linearized Saint-Venant model is being used while \cite{Y1} deals with a general linear hyperbolic system which appears in Saint-Venant equations among other linear hyperbolic laws. The works \cite{c,e,f,g} study the nonlinear Saint-Venant model. In \cite{c} the feedforward control problem of general nonlinear hyperbolic systems is studied and an application using the Saint-Venant model with friction is provided. In \cite{e,f} local convergence of the state of hyperbolic systems of conservation laws is guaranteed using a strict Lypaunov function which exploits Riemann invariants. An application to the inviscid, frictonless Saint-Venant model is provided as well. The paper \cite{g} achieves regulation of the water flow and level in water-ways using the inviscid Saint-Venant model without friction and without surface tension.  

Very few studies in the literature deal with the nonlinear viscous Saint-Venant model that is used for the description of the movement of a tank which contains an incompressible, Newtonian fluid. The first work that studied the nonlinear viscous Saint-Venant model without wall friction and without surface tension was \cite{13}. In \cite{13} an appropriate nonlinear feedback law is constructed which provides semiglobal stabilization results by following a CLF methodology. The work \cite{15} extends the results obtained in \cite{13} in the case where wall friction forces are taken into account. In \cite{15} both the case of a velocity independent friction coefficient and the general case of friction coefficient are studied. A robust with respect to wall friction stabilizing feedback law is  constructed. Another study which deals with the nonlinear viscous Saint-Venant model is \cite{14}. In \cite{14} a stabilizing output-feedback control law for the viscous Saint-Venant PDE system without wall friction and without surface tension is constructed. The output-feedback control law is utilized through a functional-observer methodology and a CLF methodology.

The study of the movement of a fluid which interacts with a gas boundary and a solid boundary is inevitably intertwined with the notion of the surface tension and the notions of contact angle and wettability (see \cite{8,q}). Surface tension is crucial as it acts in the interface between liquid and gas. From a mathematical point of view surface tension is very important because it changes the order of the PDEs (it is expressed by a third order term). Contact angle is the angle at which the fluid surface intersects with a solid boundary as stated in \cite{q}, and it is a measure of wettability of the solid surface. There is a wide literature concerning the topic of contact angles (see for instance \cite{21,8,3,9,1,2,4,7}). The concept of contact angle is significant in our study because it provides an additional boundary condition.

In this paper we solve the feedback stabilization problem for a tank containing a liquid modeled by the viscous Saint-Venant system of PDEs with surface tension and without wall friction. We consider the case of constant contact angles between the liquid and the walls of the tank, as in \cite{1,2}. We utilize a specific form of the feedback law initially presented in \cite{15}, which constitutes a more general form of the feedback law in \cite{13} with robustness to surface tension. Indeed, we saw that the proposed feedback law guarantees stabilization \textit{no matter what the value of the surface tension coefficient is}. Therefore, the knowledge of the surface tension coefficient is not necessary and the feedback law is independent of the surface tension coefficient. We achieve a spill-free exponential stabilization, with robustness to surface tension. As in \cite{13,14,15} we follow a CLF methodology and we design the feedback law based on an appropriate functional, which is the CLF. The CLF determines a specific parameterized set which approximates the state space of the control problem from the interior. 

Although this work presents enough technical similarities with \cite{15}, there are some crucial differences. Firstly, in contrast with \cite{15}, the system of PDEs contains an extra term due to surface tension and does not contain a friction term. Moreover, in order for the model to be complete and for the problem to be well-posed, an additional boundary condition is used. The additional boundary condition is provided by the assumption of a constant contact angle. Here we use only one CLF while in \cite{15} two different functionals are proposed. As a consequence this work does not provide a bound for the sup-norm of the fluid velocity, as in \cite{15}, due to the absence of an appropriate functional. Here the CLF is different from the corresponding one in \cite{15}, as it contains an additional potential energy term due to the effect of the surface tension. 

This paper is organized as follows. In Section \ref{section2} the control problem is described as well as its main objective. In Section \ref{section3} we provide the intuitive ideas and the statements of the results of this work along with some auxiliary lemmas. Section \ref{section4} includes all the proofs of the results presented in Section \ref{section3}. Finally, Section \ref{section5} points out the conclusions of this work and suggests topics for future research.

\subsection*{Notation}
\begin{itemize}
\item[$*$] $\mathbb{R} _{+} =[0,+\infty )$ is the set of non-negative real numbers. 
\item[$*$] Let $S\subseteq \mathbb{R} ^{n} $ be an open set and let $A\subseteq \mathbb{R} ^{n} $ be a set such that $S\subseteq A\subseteq cl(S)$. By $C^{0} (A;\Omega )$, we denote the class of continuous functions on $A$, which take values in $\Omega \subseteq \mathbb{R} ^{m} $. By $C^{k} (A ; \Omega )$, where $k\ge 1$ is an integer, we denote the class of functions on $A\subseteq \mathbb{R} ^{n} $, which takes values in $\Omega \subseteq \mathbb{R} ^{m} $ and has continuous derivatives of order $k$. In other words, the functions of class $C^{k} (A;\Omega )$ are the functions which have continuous derivatives of order $k$ in $S=int(A)$ that can be continued continuously to all points in $\partial S\cap A$.  When $\Omega =\mathbb{R} $ then we write $C^{0} (A)$ or $C^{k} (A)$. When $I\subseteq \mathbb{R} $ is an interval and $G\in C^{1} (I)$ is a function of a single variable, $G'(h)$ denotes the derivative with respect to $h\in I$. 
\item[$*$] Let  $I\subseteq \mathbb{R} $ be an interval, let $a<b$ be given constants and let $u:I\times [a,b]\to \mathbb{R} $ be a given function. We utilize the notation $u[t]$ to denote the profile at certain $t\in I$, i.e., $(u[t])(x)=u(t,x)$ for all $x\in [a,b]$. When $u(t,x)$ is three times differentiable with respect to $x\in [a,b]$, we use the notation $u_{x} (t,x)$, $u_{xx} (t,x)$ and $u_{xxx} (t,x)$ for the first, second and third derivative of $u$ with respect to $x\in [a,b]$ respectively, i.e.,
\begin{equation*}
\begin{array}{l}
\displaystyle{u_{x} (t,x)=\frac{\partial \, u}{\partial \, x} (t,x) }, \displaystyle{ u_{xx} (t,x)=\frac{\partial ^{2} \, u}{\partial \, x^{2} } (t,x) } \textrm{ and } \displaystyle{ u_{xxx} (t,x)=\frac{\partial ^{3} \, u}{\partial \, x^{3} } (t,x) }
\end{array}
\end{equation*}

When $u(t,x)$ is differentiable with respect to $t$, we use the notation $u_{t} (t,x)$ for the derivative of $u$ with respect to $t$, i.e., 
$$\displaystyle{u_{t} (t,x) } \displaystyle{=\frac{\partial \, u}{\partial \, t} (t,x) }$$
\item[$*$] Given a set $U\subseteq \mathbb{R} ^{n} $, $\chi _{U} $ denotes the characteristic function of $U$ defined by
\begin{equation*} 
\chi _{U} (x):=\left\{
\begin{array}{c}
1 \quad \textrm{for all} \; x\in U \\
0 \quad \textrm{for all} \; x\notin U 
 \end{array}\right.  
\end{equation*}
The sign function ${\rm sgn}:\mathbb{R} \to \mathbb{R} $ is the function defined by 
\begin{equation*} 
{\rm sgn}(x):=\left\{
\begin{array}{c}
1 \quad \textrm{for} \; x>0 \\
0 \quad \textrm{for} \; x=0\\
-1\quad \textrm{for} \; x<0 
\end{array}\right.  
\end{equation*}
\item[$*$] Consider given constants $a, b$ such that $a<b$ . For $p\in [1,+\infty )$, $L^{p} (a,b)$ denotes the set of equivalence classes of Lebesgue measurable functions $u:(a,b)\to \mathbb{R} $ with
\begin{equation*}
\left\| u\right\| _{p} :=\left(\int _{a}^{b}\left|u(x)\right|^{p} dx \right)^{1/p} <+\infty .
\end{equation*} 
 $L^{\infty } (a,b)$ denotes the set of equivalence classes of Lebesgue measurable functions $u:(a,b)\to \mathbb{R} $ with 
\begin{equation*}
\left\| u\right\| _{\infty } :={\mathop{{\rm ess}\sup }\limits_{x\in (a,b)}} \left(\left|u(x)\right|\right)<+\infty .
\end{equation*}
For an integer $k\ge 1$, $H^{k} (a,b)$ denotes the Sobolev space of functions in $L^{2} (a,b)$ with all its weak derivatives up to order $k\ge 1$ in $L^{2} (a,b)$.
\end{itemize}

\section{The Control Problem} \label{section2}
We want to manipulate the motion of a tank which contains a viscous, Newtonian, incompressible liquid.  Viscosity is utilized as a gain in the controller on the difference between the boundary liquid levels and to settle a region of attraction. The tank is subject to an acceleration which we consider as the control input and obeys Newton's second law. The problem is described by the viscous Saint-Venant equations. We restrict our study to the one-dimensional (1-D) case of the model. Moreover, contrary to prior works, in this work we do not neglect the surface tension that acts on the free surface (liquid-gas interface) but we neglect friction with the tank walls.

We intend to drive asymptotically the tank to a specified position. The aforementioned goal must be achieved without liquid spillage and by having both the tank and the liquid within the tank at rest. %Figure 1 shows a picture of the problem. 
%\begin{figure}
%	\centering
%		\includegraphics[width=3.3in, height=2.18in, keepaspectratio=false]{Tank}
%	  \caption{The tank-liquid problem}\label{fig1}
%\end{figure}
The equations describing the motion of the liquid in the tank can be derived by performing mass and momentum balances (from first principles assuming that the liquid pressure is the combination of hydrostatic pressure and capillary pressure given by the Young-Laplace equation (see \cite{X2}) and by ignoring friction with the tank walls). The equations can also be derived by using approximations of the Navier-Stokes equations for the incompressible fluid (see \cite{1,2,3,5,6,10,11}; but see also \cite{9,12} for fluid equations involving capillary phenomena).
%\lipsum[4]

We denote by  $a(t)$ the position of the left side of the tank at time $t\ge 0$ and we consider the length of the tank to be $L>0$ (a constant). The evolution of the liquid level and of the liquid velocity is described by the following equations
\begin{gather}   
 H_{t} +(Hu)_{z} =0,  \textrm{ for }  t>0 ,\, z\in \left[a(t),a(t)+L\right] \label{GrindEQ__1_}  \\             
(Hu)_{t} +\left(Hu^{2} +\frac{1}{2} gH^{2} \right)_{z} -\sigma H\left(\frac{H_{zz} }{\left(1+H_{z}^{2} \right)^{3/2} } \right)_{z} 
 =\mu \left(Hu_{z} \right)_{z} \nonumber \\
\textrm{ for } t>0,\, z\in \left(a(t),a(t)+L\right) \label{GrindEQ__2_}                                          
\end{gather}
where $H(t,z)>0$, $u(t,z)\in \mathbb{R} $ are the liquid level and the liquid velocity, respectively, at time $t\ge 0$ and position $z\in \left[a(t),a(t)+L\right]$, while $g,\mu ,\sigma >0$ (constants) are the acceleration of gravity, the kinematic viscosity of the liquid and the ratio of the surface tension and liquid density, respectively. In some papers the term $\displaystyle{\left(\frac{H_{zz} }{\left(1+H_{z}^{2} \right)^{3/2} } \right)_{z} }$ is replaced by $H_{zzz} $ (see \cite{5,6,10,11}, but here we prefer a more accurate description of the surface tension.

The liquid velocities at the walls of the tank are equal with the tank velocity. Consequently:
\begin{equation} \label{GrindEQ__3_}
u(t,a(t))=u(t,a(t)+L)=w(t), \textrm{ for } t\ge 0                                
\end{equation}
where $w(t)=\dot{a}(t)$ is the velocity of the tank at time $t\ge 0$. Moreover, we get for the tank
\begin{equation} \label{GrindEQ__4_}
\ddot{a}(t)=-f(t), \textrm{ for } t>0
\end{equation}                                            
where $-f(t)$, the control input to the problem, is the tank acceleration. Defining the quantities
\begin{align}  
v(t,x)&:=u(t,a(t)+x)-w(t) \label{GrindEQ__5_a} \\ 
h(t,x)&:=H(t,a(t)+x) \label{GrindEQ__5_b} \\ 
\xi (t)&:=a(t)-a^{*}  \label{GrindEQ__5_c} 
\end{align} 
where $a^{*} \in \mathbb{R} $ is the position (a constant) which we want the left side of the tank to reach, we get the model:
\begin{gather}
\dot{\xi }=w, \textrm{ for } t\ge 0 \label{GrindEQ__6_}  \\                                           
\dot{w}=-f, \textrm{ for } t\ge 0 \\                                         
h_{t} +(hv)_{x} =0, \textrm{ for } t>0 ,\, x\in \left[0,L\right]\label{GrindEQ__7_} \\                                     
(hv)_{t} +\left(hv^{2} +\frac{1}{2} gh^{2} \right)_{x} -\sigma h\left(\frac{h_{xx} }{\left(1+h_{x}^{2} \right)^{3/2} } \right)_{x} 
=\mu \left(hv_{x} \right)_{x} +hf, \nonumber \\
\textrm{ for } t>0 ,\, x\in \left(0,L\right)\label{GrindEQ__8_} \\
v(t,0)=v(t,L)=0, \textrm{ for } t\ge 0 \label{GrindEQ__9_}                                            
\end{gather}

\noindent Equations \eqref{GrindEQ__7_} and \eqref{GrindEQ__9_} imply that every classical solution of \eqref{GrindEQ__6_}-\eqref{GrindEQ__9_} satisfies the following 
\begin{equation}
\frac{d}{d\, t} \left(\int _{0}^{L}h(t,x)dx \right)=0 \textrm{ for all } t>0
\end{equation}
Consequently, the total mass of the liquid $m>0$ is constant, and without loss of generality we can assume that every solution of \eqref{GrindEQ__6_}-\eqref{GrindEQ__9_} satisfies the equation
\begin{equation} \label{GrindEQ__10_} 
\int _{0}^{L}h(t,x)dx \equiv m 
\end{equation} 
Due to the nature of our problem it is important to mention that the liquid level $h(t,x)$ must be positive for all times, i.e., we must have:
\begin{equation} \label{GrindEQ__11_}
\mathop{\min }\limits_{x\in [0,L]} \left(h(t,x)\right)>0, \textrm{ for } t\ge 0                                          
\end{equation}
Contrary to prior works, model \eqref{GrindEQ__6_}-\eqref{GrindEQ__9_}, \eqref{GrindEQ__10_} is not a complete mathematical description of the system. This can be seen directly by studying the linearization of model \eqref{GrindEQ__6_}-\eqref{GrindEQ__9_}, \eqref{GrindEQ__10_} but also can be seen by studying the literature (see \cite{1,2,4,7,8} and references therein). For a complete mathematical model of the system we need two additional boundary conditions that describe the interaction between the liquid and the solid walls of the tank. There are many ways to describe the evolution of the angle of contact of a liquid with a solid boundary (see the detailed presentation in \cite{8}). In \cite{1,2}, Schweizer suggested (based on energy arguments and the fact that there might be a discrepancy between the actual microscopic and the apparent macroscopic contact angle) the use of a constant contact angle. Moreover, the assumption of a constant contact angle allows the well-posedness of the overall problem (at least for small data; see \cite{1,2,7}). The constant contact angle approach has been used extensively in the literature (see for instance \cite{4,7,21}). 

In this work, we adopt the constant contact angle approach by imposing a contact angle equal to $\pi /2$. Therefore, the model \eqref{GrindEQ__6_}-\eqref{GrindEQ__9_}, \eqref{GrindEQ__10_} is accompanied by the following boundary conditions:
\begin{equation} \label{GrindEQ__12_}
h_{x} (t,0)=h_{x} (t,L)=0, \textrm{ for } t\ge 0                                          
\end{equation}
In order to avoid  liquid spillage the following condition must be satisfied:
\begin{equation} \label{GrindEQ__13_}
\mathop{\max }\limits_{x\in [0,L]} \left(h(t,x)\right)<H_{\max }, \textrm{ for } t\ge 0 
\end{equation}
where $H_{\max } >0$ is the height of the tank walls. We consider classical solutions for the system \eqref{GrindEQ__6_}-\eqref{GrindEQ__9_}, \eqref{GrindEQ__10_}, \eqref{GrindEQ__12_}, i.e., we consider 
\\
\\
$\xi \in C^{2} \left(\mathbb{R} _{+} \right)$, $w\in C^{1} \left(\mathbb{R}_{+} \right)$, $h\in C^{1} \left([0,+\infty )\times [0,L]; \right.$ $\left. (0,+\infty )\right)$ $\cap C^{3} \left((0,+\infty ) \right.$ $\left. \times (0,L)\right)$, $v\in C^{0} ([0,+\infty )\times [0,L])$ $\cap C^{1} \left((0,+\infty )\right.$ $\left. \times [0,L]\right)$ with $v[t] $ $\in C^{2} \left((0,L)\right)$ for each $t>0$ 
\\
\\
that satisfy equations 
\eqref{GrindEQ__6_}-\eqref{GrindEQ__9_}, \eqref{GrindEQ__10_}, \eqref{GrindEQ__12_} for a given input $f\in C^{0} \left(\mathbb{R} _{+} \right)$.

For the system \eqref{GrindEQ__6_}-\eqref{GrindEQ__9_}, \eqref{GrindEQ__10_}, \eqref{GrindEQ__12_} with $f(t)\equiv 0$ (which is the open loop system), there exists a continuum of equilibrium points, i.e., the points
\begin{gather} 
h(x)\equiv h^{*}, v(x)\equiv 0, \textrm{ for } x\in \left[0,L\right] \label{GrindEQ__14_} \\                                       
\xi \in \mathbb{R}, w=0  \label{GrindEQ__15_}
\end{gather} 
where $h^{*} =m/L$.  We assume that the equilibrium points satisfy the condition \eqref{GrindEQ__13_}, i.e., $h^{*} <H_{\max } $.

We intend to construct a  robust with respect to surface tension control law of the form
\begin{equation} \label{GrindEQ__16_}
f(t)=F\left(h[t],v[t],\xi (t),w(t)\right), \textrm{ for } t>0,                                  
\end{equation}
which stabilizes the equilibrium point with $\xi =0$. In addition to that we impose the condition \eqref{GrindEQ__13_}. 

It follows from  \eqref{GrindEQ__14_}, \eqref{GrindEQ__15_} that the desired equilibrium point is not asymptotically stable for the open-loop system. Consequently the described control problem is not at all trivial.

\section{The feedback law} \label{section3}
\subsection{The Control Lyapunov Functional (CLF)}
We define the set $S\subset \mathbb{R} ^{2} \times \left(C^{0} \left([0,L]\right)\right)^{2} $ as follows:
\begin{equation} \label{GrindEQ__17_} 
(\xi ,w,h,v)\in S\; \Leftrightarrow \left\{\begin{array}{c} 
{h\in C^{0} \left([0,L];(0,+\infty )\right)\cap H^{1} (0,L)} \\ {v\in C^{0} \left([0,L]\right)} \\ {\displaystyle{\int _{0}^{L}h(x)dx =m }} \\ {(\xi ,w)\in \mathbb{R} ^{2} ,v(0)=v(L)=0} 
\end{array}\right.  
\end{equation} 
The above definition guarantees that every $(\xi ,w,h,v)\in S $ satisfies \eqref{GrindEQ__9_} and \eqref{GrindEQ__10_}. 
In addition to that, we define the following functionals for all $(\xi ,w,h,v)\in S$:

\begin{align} \label{GrindEQ__18_}
V(\xi ,w,h,v):=\delta E(h,v)+W(h,v)+\frac{qk^{2} }{2} \xi ^{2} +\frac{q}{2} \left(w+k\xi \right)^{2}    
\end{align}
\begin{gather} 
E(h,v):=\frac{1}{2} \int _{0}^{L}h(x)v^{2} (x)dx +\frac{g}{2} \left\| h-h^{*} \chi _{[0,L]} \right\| _{2}^{2} \nonumber \\
+\sigma \int _{0}^{L}\left(\sqrt{1+\left(h'(x)\right)^{2} } -1\right)dx \label{GrindEQ__19_} 
\end{gather} 
\begin{gather} 
W(h,v):=\frac{1}{2} \int _{0}^{L}h^{-1} (x)\left(h(x)v(x)+\mu h'(x)\right)^{2} dx+\frac{g}{2} \left\| h-h^{*} \chi _{[0,L]} \right\| _{2}^{2} \nonumber \\
+\sigma \int _{0}^{L}\Bigg(\sqrt{1+\left(h'(x)\right)^{2} } -1\Bigg)dx  \label{GrindEQ__20_} 
\end{gather}  
where $k,q>0$ are position error and velocity gains (to be selected) respectively, $\delta >0$ and $h^{*} =m/L$. In particular:
\\
\begin{itemize}
\item  the functional $E$ is the mechanical energy of the liquid within the tank as it is the sum of the potential energy
$$\displaystyle{\frac{g}{2} } \left\| h-h^{*} \chi _{[0,L]} \right\| _{2}^{2} +\sigma \int _{0}^{L}\left(\sqrt{1+\left(h'(x)\right)^{2} } -1\right)dx $$ 
and the kinetic energy
$$ \displaystyle{\frac{1}{2} \int _{0}^{L}h(x)v^{2} (x) dx} $$ 
of the liquid. It should be noticed that there is no contribution to the mechanical energy of the tank-liquid interface which allows to give the interpretation that the boundary condition \eqref{GrindEQ__12_} (a constant contact angle) is a result of the absence of interaction between liquid and solid.  
\item  the functional $W$ is a kind of mechanical energy of the liquid within the tank and has been used extensively in the literature of isentropic, compressible liquid flow (see \cite{17,18,19,20}) as well as in \cite{13,14,15}.
\end{itemize} 
\quad \vspace{-0.5em}

The functional $V(\xi ,w,h,v)$ defined by \eqref{GrindEQ__18_} will be utilized as a CLF for the system, and for the derivation of useful bounds for the function $h$ as guaranteed by the following lemma.
\newtheorem{lemma}{Lemma}
\begin{lemma} \label{lemma1}
Let constants $q,k,\delta >0$ be given, and define the increasing function $G\in C^{0} (\mathbb{R})\cap C^{1} ((-\infty ,0)\cup (0,+\infty ))$ as follows
\begin{equation} \label{GrindEQ__21_} 
G(h):=\left\{
\begin{array}{c}
{\rm sgn}\left(h-h^{*} \right) 
\displaystyle{ \left(\frac{2}{3} h\sqrt{h} -2h^{*} \sqrt{h} +\frac{4}{3} h^{*} \sqrt{h^{*} } \right)\quad} {\rm for} \; h>0 \\
\displaystyle{-\frac{4}{3} h^{*} \sqrt{h^{*} } +h}\quad {\rm for} \; h\le 0 
 \end{array}\right.  
\end{equation} 
Denote by $G^{-1} :\mathbb{R} \to \mathbb{R} $ the inverse function of $G$ and define the constant
\begin{equation} \label{GrindEQ__22_} 
c:=\frac{1}{\mu \sqrt{\delta g} }  
\end{equation} 
Then for every $(\xi ,w,h,v)\in S $, the following inequality holds:
\begin{equation} \label{GrindEQ__23_}
\begin{array}{l}
Q_{1} \left(V(\xi ,w,h,v)\right)\le h(x)\le Q_{2} \left(V(\xi ,w,h,v)\right), \textrm{ for all } x\in [0,L],                                                      
\end{array}
\end{equation}
where the functions $Q_{i} :\mathbb{R} _{+} \to \mathbb{R} $ ($i=1,2$) are defined as follows for all $s\ge 0$:
\begin{gather}   
Q_{1} (s):=\max \left(G^{-1} \left(-cs\right),N_{1}(s), N_{2}(s) \right) \label{GrindEQ__24_a} \\ 
Q_{2} (s):=\min \left(G^{-1} \left(cs\right),P_{1}(s), P_{2}(s) \right) \label{GrindEQ__24_b}  
\end{gather}
with the functions $N_{i} :\mathbb{R} _{+} \to \mathbb{R} $ ($i=1,2$) and $P_{i} :\mathbb{R} _{+} \to \mathbb{R} $ ($i=1,2$) defined by the following expressions for all $s\ge 0$:
\begin{gather}
\displaystyle{N_{1}(s):=h^{*} -\sqrt{\frac{2m\left(1+\delta \right)s}{\delta \mu ^{2} } } }, \\
\displaystyle{N_{2}(s):= h^{*} -\sqrt{\left(\frac{s}{\sigma (\delta +1)} +L\right)^{2} -L^{2} } } , \vspace{0.4em}\\
\displaystyle{P_{1}(s):=h^{*} +\sqrt{\frac{2m\left(1+\delta \right)s}{\delta \mu ^{2} } } }, \\
\displaystyle{P_{2}(s):=h^{*} +\sqrt{\left(\frac{s}{\sigma (\delta +1)} +L\right)^{2} -L^{2} } }
\end{gather}
\end{lemma}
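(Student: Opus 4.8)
The plan is to bound $h(x)$ by exploiting that the functional $V$ in \eqref{GrindEQ__18_} dominates a sum of individually nonnegative energy contributions. First I would discard the nonnegative gains $\frac{qk^2}{2}\xi^2+\frac{q}{2}(w+k\xi)^2$ to get $V\ge \delta E(h,v)+W(h,v)$, with $E,W$ as in \eqref{GrindEQ__19_}, \eqref{GrindEQ__20_}. The decisive preliminary step is an algebraic manipulation of the kinetic part of $\delta E+W$: expanding $h^{-1}(hv+\mu h')^2 = hv^2+2\mu v h'+\mu^2 (h')^2/h$ and completing the square in $v$ inside $\frac{\delta+1}{2}\int_0^L hv^2\,dx+\mu\int_0^L v h'\,dx$, one obtains the identity
\begin{equation*}
\delta E + W = \frac{\delta+1}{2}\int_0^L h\Big(v+\tfrac{\mu h'}{(\delta+1)h}\Big)^2 dx + K + P + (\delta+1)\sigma\int_0^L\big(\sqrt{1+(h')^2}-1\big)dx,
\end{equation*}
where $K:=\frac{\mu^2\delta}{2(\delta+1)}\int_0^L \frac{(h'(x))^2}{h(x)}dx$ and $P:=\frac{(\delta+1)g}{2}\|h-h^*\chi_{[0,L]}\|_2^2$. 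Since every omitted term is nonnegative, this gives the bookkeeping inequalities I use throughout: $V\ge K$, $V\ge P$, $V\ge K+P$, and $V\ge(\delta+1)\sigma\int_0^L(\sqrt{1+(h')^2}-1)dx$.

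Next, because $h$ is continuous with $\int_0^L h\,dx=m=h^*L$ by \eqref{GrindEQ__10_}, the mean value theorem for integrals furnishes a point $x_0\in[0,L]$ with $h(x_0)=h^*$, and all three pairs of bounds follow by integrating from $x_0$. For $N_1,P_1$ I write $h(x)-h^*=\int_{x_0}^x h'\,ds$ and apply Cauchy--Schwarz as $|h(x)-h^*|\le(\int_0^L \frac{(h')^2}{h}ds)^{1/2}(\int_0^L h\,ds)^{1/2}$; using $\int_0^L h=m$ and $\int_0^L \frac{(h')^2}{h}=\frac{2(\delta+1)}{\mu^2\delta}K\le\frac{2(\delta+1)}{\mu^2\delta}V$ yields exactly $(h(x)-h^*)^2\le\frac{2m(\delta+1)V}{\delta\mu^2}$, i.e. $N_1(V)\le h(x)\le P_1(V)$. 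For $N_2,P_2$ I use the arc length: from the surface-tension inequality above, $\int_0^L\sqrt{1+(h')^2}dx\le A$ with $A:=\frac{V}{(\delta+1)\sigma}+L$. A purely geometric step shows that for any $x$ the arc length over $[0,L]$ is at least $\sqrt{L^2+(h(x)-h^*)^2}$: split $[0,L]$ at $x_0$ and $x$, bound the two outer pieces by their horizontal spans and the middle piece by the chord joining $(x_0,h^*)$ and $(x,h(x))$, and note that $d\mapsto\sqrt{d^2+b^2}-d$ is decreasing on $[0,L]$. Combining with $\le A$ gives $(h(x)-h^*)^2\le A^2-L^2$, which is precisely $N_2(V)\le h(x)\le P_2(V)$.

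Finally, the bounds through $G$ from \eqref{GrindEQ__21_} are where the estimate must be sharp. A direct computation gives $G'(h)=|h-h^*|/\sqrt h$ and $G(h^*)=0$, so (using that $h\in H^1$ is absolutely continuous and $G$ is $C^1$ on the compact range of $h$) $G(h(x))=\int_{x_0}^x \frac{|h-h^*|}{\sqrt h}h'\,ds$, whence $|G(h(x))|\le\int_0^L |h-h^*|\frac{|h'|}{\sqrt h}ds$. Cauchy--Schwarz gives $|G(h(x))|\le(\int_0^L(h-h^*)^2ds)^{1/2}(\int_0^L \frac{(h')^2}{h}ds)^{1/2}$, and substituting $\int_0^L(h-h^*)^2=\frac{2}{(\delta+1)g}P$ and $\int_0^L\frac{(h')^2}{h}=\frac{2(\delta+1)}{\mu^2\delta}K$ collapses the right-hand side to $\frac{2}{\mu\sqrt{\delta g}}\sqrt{PK}=2c\sqrt{PK}$, with $c$ as in \eqref{GrindEQ__22_}.

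The hard part is the constant: bounding $P$ and $K$ separately by $V$ only gives $|G(h(x))|\le 2cV$, which is off by a factor of two. The resolution is to keep $P$ and $K$ together---they are distinct summands of $\delta E+W$, so $P+K\le V$---and apply the arithmetic--geometric mean inequality $\sqrt{PK}\le\frac{P+K}{2}\le\frac{V}{2}$, landing exactly on $|G(h(x))|\le cV$. Since $G$ is continuous and strictly increasing with inverse $G^{-1}$, the relation $-cV\le G(h(x))\le cV$ inverts to $G^{-1}(-cV)\le h(x)\le G^{-1}(cV)$. Intersecting the three upper bounds and the three lower bounds over all $x\in[0,L]$ then produces $Q_1(V)\le h(x)\le Q_2(V)$, which is \eqref{GrindEQ__23_}. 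I expect the main obstacles to be precisely this factor-of-two sharpening via $P+K\le V$ and AM--GM, together with the completing-the-square identity that isolates $K$ and the geometric arc-length inequality.
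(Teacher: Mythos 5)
Your proof is correct, and for the surface-tension bounds it takes a genuinely different route from the paper. The paper's own proof of Lemma \ref{lemma1} is mostly a citation: the $G^{-1}(\pm cs)$ and $N_{1},P_{1}$ bounds are imported wholesale from the proof of Lemma 1 in \cite{15}, and only the $N_{2},P_{2}$ estimate is proved here. Your reconstruction of the imported part checks out in every detail --- the completing-the-square identity correctly isolates $K=\frac{\mu ^{2}\delta }{2(\delta +1)}\int _{0}^{L}\frac{(h'(x))^{2}}{h(x)}dx$ (the cross term $\mu \int v h'$ absorbs into the square, leaving the coefficient $\frac{\mu^2}{2}\bigl(1-\frac{1}{\delta+1}\bigr)=\frac{\mu^2\delta}{2(\delta+1)}$), $G'(h)=\left|h-h^{*}\right|/\sqrt{h}$ with $G(h^{*})=0$ is right, and the weighted Cauchy--Schwarz step yields $\left|G(h(x))\right|\le 2c\sqrt{PK}$ with exactly the constant $c$ of \eqref{GrindEQ__22_}; the factor-of-two repair via $P+K\le V$ and AM--GM is precisely the trick underlying the cited argument, so there you have rediscovered the paper's (outsourced) proof rather than replaced it. Where you diverge is $N_{2},P_{2}$: the paper applies Jensen's inequality to the increasing convex function $\varphi (s)=\sqrt{1+s^{2}}-1$, combines $\left\| h-h^{*}\chi _{[0,L]}\right\| _{\infty }\le \left\| h'\right\| _{1}$ (using a point $x^{*}$ with $h(x^{*})=h^{*}$) with $\varphi \bigl(\frac{1}{L}\left\| h'\right\| _{1}\bigr)\le \frac{1}{L}\int _{0}^{L}\varphi \left(\left|h'\right|\right)dx\le \frac{V}{L\sigma (\delta +1)}$, and inverts $\varphi $; you instead bound the total arc length $\int _{0}^{L}\sqrt{1+(h')^{2}}\,dx\le \frac{V}{\sigma (\delta +1)}+L$ from below by $\sqrt{L^{2}+(h(x)-h^{*})^{2}}$ via the chord-plus-horizontal-spans decomposition and the monotonicity of $d\mapsto \sqrt{d^{2}+b^{2}}-d$. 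The two routes land on the identical estimate $(h(x)-h^{*})^{2}\le \bigl(\frac{V}{\sigma (\delta +1)}+L\bigr)^{2}-L^{2}$ --- unsurprisingly, since your chord inequality is the integral Minkowski/Jensen inequality in disguise --- but yours buys a transparent geometric interpretation (the free surface must span the horizontal distance $L$ and still attain the height $h(x)$), whereas the paper's is shorter once convexity of $\varphi $ is invoked. One cosmetic remark: your appeals to absolute continuity of $h\in H^{1}(0,L)$ and to the mean value theorem producing $x_{0}$ with $h(x_{0})=h^{*}$ are exactly the regularity facts the paper also uses, so no gap there.
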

\textbf{Remark 1.} \label{remark1}
It follows from \eqref{GrindEQ__21_}, \eqref{GrindEQ__22_}, \eqref{GrindEQ__24_a} and the fact that $h^{*} =m/L$ that 
$Q_{1} \left(V(\xi ,w,h,v)\right)>0$ when 
\begin{equation}
V(\xi ,w,h,v)<\max  \left(\theta_{1},\theta_{2}, \theta_{3} \right) 
\end{equation}
with
\begin{equation*}
\begin{array}{l}
\displaystyle{\theta_{1}:=\frac{4}{3} \mu h^{*} \sqrt{\delta gh^{*} } \textrm{, } \theta_{2}:= \frac{\mu ^{2} h^{*} \delta }{2L\left(1+\delta \right)} } \quad \textrm{ and } \vspace{0.4em} \\
\theta_{3}:= \sigma \left(\delta +1\right)\left(\sqrt{\left(h^{*} \right)^{2} +L^{2} } -L\right)
\end{array}
\end{equation*}
Definitions \eqref{GrindEQ__24_a} and \eqref{GrindEQ__24_b} imply that \textit{$Q_{2} :\mathbb{R} _{+} \to \mathbb{R} $} is an increasing function while \textit{$Q_{1} :\mathbb{R} _{+} \to \mathbb{R} $} is a decreasing function.

It is important to mention that Lemma \ref{lemma1} is more general than Lemma 1 in \cite{13} and Lemma 1 in \cite{15}. Lemma 1 in \cite{13} can be applied only for the case $\delta =1$ and $\sigma =0$, while Lemma 1 in \cite{15} can be applied only for the case $\sigma =0$. Here Lemma \ref{lemma1} can be applied for all $\delta >0$ and $\sigma \ge 0$.
\subsection{The state space}
As in \cite{13,14,15} the state space will be appropriately defined in order to exclude states of the set $S$ defined by \eqref{GrindEQ__17_} that violate the condition \eqref{GrindEQ__13_}, i.e, the states that cause liquid spillage.
We define the following 
\begin{equation} \label{GrindEQ__25_} 
X:=\left\{\, (\xi ,w,h,v)\in S\, :\, {\mathop{\max }\limits_{x\in [0,L]}} \left(h(x)\right)<H_{\max } \, \right\} 
\end{equation} 
\begin{equation} \label{GrindEQ__26_} 
R:=\frac{2\mu \sqrt{\delta gh^{*} } }{3} \left(H_{\max } -h^{*} \right)\min \left(\zeta _{1} ,\zeta _{2} \right) 
\end{equation} 
where
\begin{gather} 
\displaystyle{\zeta _{1} :=\max \left(\Gamma_{1}, \Gamma_{2}, \Gamma_{3} \right ) } \textrm{ and } \vspace{0.5em} \\
\displaystyle{\zeta _{2} := \frac{h^{*} }{H_{\max } -h^{*} } \max \left(2,\Delta_{1},\Delta_{2} \right)}
\end{gather}
with $\Gamma_{1}, \Gamma_{2}, \Gamma_{3}, \Delta_{1}$ and $\Delta_{2}$ defined as follows:
\begin{gather}
\Gamma_{1}:=\sqrt{\frac{H_{\max } }{h^{*} } } -\frac{2\sqrt{h^{*} } }{\sqrt{H_{\max } } +\sqrt{h^{*} } },   \vspace{6em} \\
\Gamma_{2}:=\frac{3\mu \sqrt{\delta } \left(H_{\max } -h^{*} \right)}{4m\left(1+\delta \right)\sqrt{gh^{*} } } , \vspace{6em}\\
\Gamma_{3}:= \frac{3\sigma (\delta +1)\left(\sqrt{L^{2} +\left(H_{\max } -h^{*} \right)^{2} } -L\right)}{2\mu \sqrt{\delta gh^{*} } \left(H_{\max } -h^{*} \right)} , \vspace{6em}\\
\displaystyle{\Delta_{1}:=\frac{3\mu \sqrt{\delta } }{4L\sqrt{gh^{*} } \left(1+\delta \right)}, } \vspace{6em} \\
\Delta_{2}:=\frac{3\sigma (\delta +1)\sqrt{h^{*} } }{2\mu \sqrt{\delta g} \left(\sqrt{\left(h^{*} \right)^{2} +L^{2} } +L\right)} 
\end{gather}
 
The aforementioned definition \eqref{GrindEQ__26_}, the fact that $h^{*} <H_{\max } $ and Lemma \ref{lemma1} imply  for all $(\xi ,w,h,v)\in S$ with $V(\xi ,w,h,v)<R$ the following
\begin{equation} \label{GrindEQ__27_}
\begin{array}{l}
0<Q_{1} \left(V(\xi ,w,h,v)\right)\le h(x)\le Q_{2} \left(V(\xi ,w,h,v)\right) 
<H_{\max }, \textrm{ for all } x\in [0,L] \vspace{0.4em}
\end{array}
\end{equation}
Consequently, the conditions \eqref{GrindEQ__13_} for avoiding liquid spillage are satisfied when $(\xi ,w,h,v)\in S$ with $V(\xi ,w,h,v)<R$. 

The set $X$ defined by \eqref{GrindEQ__25_} is the state space of system \eqref{GrindEQ__6_}-\eqref{GrindEQ__9_}, \eqref{GrindEQ__10_}, \eqref{GrindEQ__12_}. In particular, we consider as state space the metric space $X\subset \mathbb{R} ^{2} \times H^{1} \left(0,L\right)\times L^{2} \left(0,L\right)$ with metric induced by the norm of the underlying normed linear space $\mathbb{R} ^{2} \times H^{1} \left(0,L\right)\times L^{2} \left(0,L\right)$, i.e., 
\begin{equation} \label{GrindEQ__28_} 
\left\| (\xi ,w,h,v)\right\| _{X} =\left(\xi ^{2} +w^{2} +\left\| h\right\| _{2}^{2} +\left\| h'\right\| _{2}^{2} +\left\| v\right\| _{2}^{2} \right)^{1/2}  
\end{equation}
However, we need to approximate the state space from its interior by using certain parameterized sets that allow us to obtain useful estimates. We define 
\begin{align} \label{GrindEQ__29_}
X_{V} \left(r\right):=\left\{\, (\xi ,w,h,v)\in S\, :\, V(\xi ,w,h,v)\le r\, \right\}, \textrm{ for } r\ge 0  
\end{align}
Inequalities \eqref{GrindEQ__27_} imply that 
\begin{equation} \label{GrindEQ__30_}
X_{V} \left(r\right)\subseteq X, \textrm{ for all } r\in \left[0,R\right)                                     
\end{equation}
As indicated by the following proposition the set $X_{V} \left(r\right)$ for $r>0$ contains a neighborhood of  $\left(0,0,h^{*} \chi _{[0,L]} ,0\right)$ (in the topology of $X$ with metric induced by the norm $\left\| \textrm{ } \right\| _{X} $ defined by \eqref{GrindEQ__28_}).
\newtheorem{proposition}{Proposition}
\begin{proposition} \label{proposition1}
Let constants $q,k,\delta >0$ be given. Then for every $(\xi ,w,h,v)\in S$ satisfying the inequality 
\begin{equation} \label{epsilon_cond1}
\left\| (0,w,h-h^{*} \chi _{[0,L]} ,v)\right\| _{X} \le \varepsilon 
\end{equation}
\noindent for some $\varepsilon >0$ with
\begin{equation} \label{epsilon_cond2}
\varepsilon <\min \left(h^{*} ,H_{\max } -h^{*} \right) / \sqrt{L},
\end{equation}
the following inequality holds:
\begin{align} \label{GrindEQ__31_}
\hspace{-0.6em} V(\xi ,w,h,v) 
\le  C_{1} \left \| (\xi ,w,h-h^{*} \chi _{[0,L]} ,v)\right\| _{X}^{2} + C_{2} \left\| (\xi ,w,h-h^{*} \chi _{[0,L]} ,v)\right\| _{X}   
\end{align}
where 
\begin{gather}
C_{1}:= \displaystyle{ \max \Bigg ( \frac{\mu ^{2}}{h^{*} -\varepsilon \sqrt{L} }, \frac{\delta +1}{2} g,\frac{\left(\delta +2\right)H_{\max } }{2} ,q, \frac{3qk^{2} }{2} \Bigg ), } \\
\hspace{-2em} C_{2}:=\sigma (\delta +1)\sqrt{L}
\end{gather}
and $\left\| \, \cdot \, \right\| _{X} $ is defined by \eqref{GrindEQ__28_}. 
\end{proposition}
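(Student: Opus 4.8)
The plan is to estimate the four groups of terms in $V$ one at a time and to observe that, term by term, the coefficients they produce are exactly the entries appearing in the maximum defining $C_{1}$, while the single genuinely linear contribution comes from the surface-tension energy and yields $C_{2}$. Throughout I write $\eta := h-h^{*}\chi_{[0,L]}$, so that $\eta'=h'$ and $\|(\xi,w,\eta,v)\|_{X}^{2}=\xi^{2}+w^{2}+\|\eta\|_{2}^{2}+\|h'\|_{2}^{2}+\|v\|_{2}^{2}$.

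The first and only delicate step is a uniform two-sided bound on $h$. The mass constraint in \eqref{GrindEQ__17_} together with $h^{*}=m/L$ gives $\int_{0}^{L}\eta\,dx=0$, so by continuity $\eta$ vanishes at some $x_{0}\in[0,L]$; then for every $x$ we have $|\eta(x)|=\big|\int_{x_{0}}^{x}h'\big|\le\int_{0}^{L}|h'|\,dx\le\sqrt{L}\,\|h'\|_{2}\le\sqrt{L}\,\varepsilon$ by Cauchy--Schwarz and hypothesis \eqref{epsilon_cond1}. Hence $h^{*}-\sqrt{L}\varepsilon\le h(x)\le h^{*}+\sqrt{L}\varepsilon$, and the smallness condition \eqref{epsilon_cond2} makes the lower bound strictly positive and the upper bound strictly below $H_{\max}$. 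This supplies the two facts used repeatedly: $h(x)\le H_{\max}$ and $h^{-1}(x)\le(h^{*}-\sqrt{L}\varepsilon)^{-1}$.

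Next I would bound the remaining quadratic pieces. The kinetic term of $\delta E$ gives $\tfrac{\delta}{2}\int_{0}^{L}hv^{2}\le\tfrac{\delta}{2}H_{\max}\|v\|_{2}^{2}$, and for the kinetic term of $W$ I use $(hv+\mu h')^{2}\le 2h^{2}v^{2}+2\mu^{2}(h')^{2}$, so that after dividing by $h$ and integrating, $\tfrac12\int_{0}^{L}h^{-1}(hv+\mu h')^{2}\le\int_{0}^{L}hv^{2}+\mu^{2}\int_{0}^{L}h^{-1}(h')^{2}\le H_{\max}\|v\|_{2}^{2}+\tfrac{\mu^{2}}{h^{*}-\sqrt{L}\varepsilon}\|h'\|_{2}^{2}$; the factor $2$ is absorbed by the prefactor $\tfrac12$. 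Adding the two kinetic estimates collects the coefficient $\tfrac{(\delta+2)H_{\max}}{2}$ on $\|v\|_{2}^{2}$ and $\tfrac{\mu^{2}}{h^{*}-\sqrt{L}\varepsilon}$ on $\|h'\|_{2}^{2}$. The hydrostatic potential terms of $\delta E$ and $W$ simply add to $\tfrac{(\delta+1)g}{2}\|\eta\|_{2}^{2}$, and the finite-dimensional part is handled by $(w+k\xi)^{2}\le 2w^{2}+2k^{2}\xi^{2}$, giving $\tfrac{qk^{2}}{2}\xi^{2}+\tfrac{q}{2}(w+k\xi)^{2}\le\tfrac{3qk^{2}}{2}\xi^{2}+qw^{2}$; note that $\xi$ and $w$ need no smallness here, since they enter only quadratically. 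These four estimates reproduce precisely the remaining entries $\tfrac{\delta+1}{2}g$, $\tfrac{(\delta+2)H_{\max}}{2}$, $q$ and $\tfrac{3qk^{2}}{2}$ of $C_{1}$.

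The decisive step, and the only place surface tension enters, is the capillary energy $(\delta+1)\sigma\int_{0}^{L}\big(\sqrt{1+(h')^{2}}-1\big)\,dx$ surviving from $\delta E$ and $W$. Here I use the elementary inequality $\sqrt{1+t^{2}}-1\le|t|$, which yields $\int_{0}^{L}\big(\sqrt{1+(h')^{2}}-1\big)\,dx\le\int_{0}^{L}|h'|\,dx\le\sqrt{L}\,\|h'\|_{2}$, and therefore a bound $C_{2}\,\|(\xi,w,h-h^{*}\chi_{[0,L]},v)\|_{X}$ with $C_{2}=\sigma(\delta+1)\sqrt{L}$, because $\|h'\|_{2}\le\|(\xi,w,h-h^{*}\chi_{[0,L]},v)\|_{X}$. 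Unlike the hydrostatic and kinetic energies, the surface-tension energy is here controlled linearly by $\|h'\|_{2}$, which is exactly what produces the extra linear term $C_{2}\|\cdot\|_{X}$ in \eqref{GrindEQ__31_}. Finally, collecting all quadratic contributions, bounding each of their coefficients by $C_{1}$, and bounding each quadratic factor $\|\eta\|_{2}^{2}$, $\|h'\|_{2}^{2}$, $\|v\|_{2}^{2}$, $w^{2}$, $\xi^{2}$ by $\|(\xi,w,h-h^{*}\chi_{[0,L]},v)\|_{X}^{2}$, gives \eqref{GrindEQ__31_}. I expect the sup-norm bound on $h$ in the first step to be the only nontrivial point; everything afterward is bookkeeping aimed at landing each coefficient inside the maximum defining $C_{1}$.
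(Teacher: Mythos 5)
Your proof is correct and follows essentially the same route as the paper: the same splitting of $V$ via $(hv+\mu h')^{2}\le 2h^{2}v^{2}+2\mu^{2}(h')^{2}$, $(w+k\xi)^{2}\le 2w^{2}+2k^{2}\xi^{2}$ and $\sqrt{1+(h')^{2}}-1\le |h'|$, leading to exactly the paper's intermediate bound with coefficients $\tfrac{(\delta+2)H_{\max}}{2}$, $\mu^{2}(h^{*}-\varepsilon\sqrt{L})^{-1}$, $\tfrac{(\delta+1)g}{2}$, $q$, $\tfrac{3qk^{2}}{2}$ and the linear term $\sigma(\delta+1)\sqrt{L}\,\|h'\|_{2}$. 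The only difference is presentational: the sup-norm bound $h^{*}-\sqrt{L}\varepsilon\le h\le h^{*}+\sqrt{L}\varepsilon$, which the paper imports by citing Proposition 2.5 of its reference [15], you derive explicitly from the mass constraint, continuity and Cauchy--Schwarz, which is precisely the cited argument.
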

\subsection{Stabilization results}
The following theorem guarantees exponential stabilization of the state of the system \eqref{GrindEQ__6_}-\eqref{GrindEQ__9_}, \eqref{GrindEQ__10_}, \eqref{GrindEQ__12_}  by means of the nonlinear feedback law \eqref{GrindEQ__34_}.
\newtheorem{theorem}{Theorem}
\begin{theorem}[Stabilization of the Tank-Liquid System] \label{theorem1}
\quad \\
Let arbitrary constants $\omega ,k,q,\delta >0$ be given and define $R$ by means of \eqref{GrindEQ__26_}. Let arbitrary $r\in [0,R)$ be given and assume that 
\begin{equation} \label{GrindEQ__32_} 
k<q\theta (r) 
\end{equation} 
where
\begin{align} \label{GrindEQ__33_} 
\theta(r) := \frac{\omega g\mu \delta \pi ^{2} Q_{1} (r)}{g\mu \delta \pi ^{2} Q_{1} (r)+2\omega L\left(mgLH_{\max } (\delta +1)^{2} +2\mu ^{2} \delta \pi ^{2} Q_{1} (r)\right)}   
\end{align} 
where $Q_{1}$ is defined by \eqref{GrindEQ__24_a}. Then there exist constants $M,\lambda >0$ with the following property: 
\\
\\
\noindent {\normalfont\textbf{(P)}} Every classical solution of the system \eqref{GrindEQ__6_}-\eqref{GrindEQ__9_}, \eqref{GrindEQ__10_}, \eqref{GrindEQ__12_} and
\begin{gather} 
f(t)=-\omega \left((\delta +1)\int _{0}^{L}h(t,x)v(t,x)dx +\mu \left(h(t,L)-h(t,0)\right)-q\left(w(t)+k\xi (t)\right) \right ), \nonumber \\
{\rm for } \; t>0 \label{GrindEQ__34_}
\end{gather}
with $\left(\xi (0),w(0),h[0],v[0]\right)\in X_{V} (r)$, satisfies $(\xi (t),w(t),h[t],$ $v[t])$ $\in X_{V} (r)$ and the following estimate for $t\ge 0$:
\begin{align} \label{GrindEQ__35_} 
&\left\| \left(\xi (t),w(t),h[t]-h^{*} \chi _{[0,L]} ,v[t]\right)\right\| _{X} \nonumber \\ 
&\le M\exp \left(-\lambda \, t\right)\left\| \left(\xi (0),w(0),h[0]-h^{*} \chi _{[0,1]} ,v[0]\right)\right\| _{X}  
\end{align}
\end{theorem}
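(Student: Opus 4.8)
The plan is to use the functional $V$ from \eqref{GrindEQ__18_} as a strict Lyapunov functional for the closed loop, to establish a differential inequality $\dot V\le -2\lambda V$ along classical solutions, and then to transfer this decay to the $X$-norm by norm equivalence. I would first settle invariance and the no-spillage property as a by-product: once $\dot V\le 0$ is shown, $V(\xi(t),w(t),h[t],v[t])\le V(\xi(0),w(0),h[0],v[0])\le r<R$ for as long as the classical solution exists, so Lemma \ref{lemma1} and \eqref{GrindEQ__27_} give $0<Q_1(r)\le h(t,x)\le Q_2(r)<H_{\max}$. This keeps $(\xi(t),w(t),h[t],v[t])\in X_V(r)$, rules out spillage, and keeps $h$ bounded away from $0$ so that the factor $h^{-1}$ in $W$ and all the energy identities remain meaningful.

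The core computation is $\dot V$. I would differentiate $\delta E+W$ along \eqref{GrindEQ__7_}--\eqref{GrindEQ__8_}, integrating by parts and using both $v(t,0)=v(t,L)=0$ from \eqref{GrindEQ__9_} and $h_x(t,0)=h_x(t,L)=0$ from \eqref{GrindEQ__12_}. For $E$ the capillary contribution cancels exactly: the derivative of $\sigma\int_0^L(\sqrt{1+h_x^2}-1)\,dx$ equals $\sigma\int_0^L(hv)_x\kappa\,dx$ with $\kappa:=h_{xx}(1+h_x^2)^{-3/2}$ (the boundary term vanishing by \eqref{GrindEQ__12_}), which cancels the surface-tension term of \eqref{GrindEQ__8_}, leaving $\dot E=f\int_0^L hv\,dx-\mu\int_0^L hv_x^2\,dx$. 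For $W$ I would exploit the effective-velocity identity $h^{-1}(hv+\mu h_x)^2=h(v+\mu h_x/h)^2$; since $\tilde v:=v+\mu h_x/h$ vanishes at the endpoints (there $v=0$ and $h_x=0$), the same manipulation produces the control coupling $f\int_0^L(hv+\mu h_x)\,dx=f\big(\int_0^L hv\,dx+\mu(h(L)-h(0))\big)$, a nonnegative viscous dissipation $D_W\ge 0$ controlling the level gradient (computed as in \cite{15}, where $\delta>0$ is needed so that the $E$-dissipation dominates the indefinite viscous contribution of $W$), and a favorable leftover capillary term $-\sigma\mu\int_0^L h_{xx}^2(1+h_x^2)^{-3/2}\,dx\le 0$. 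Thus $\sigma$ enters $\dot V$ only through a nonpositive term, which explains why the feedback need not know $\sigma$ (robustness). Adding the pieces, the control-dependent part of $\delta\dot E+\dot W$ is precisely $f\cdot z$ with $z:=(\delta+1)\int_0^L hv\,dx+\mu(h(L)-h(0))$, matching the bracket in \eqref{GrindEQ__34_}.

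Next I would append the rigid-body terms. Using $\dot\xi=w$, $\dot w=-f$, one gets $\frac{d}{dt}\big[\tfrac{qk^2}{2}\xi^2+\tfrac{q}{2}(w+k\xi)^2\big]=2qk^2\xi w+qkw^2-qf(w+k\xi)$, so that, writing $y:=z-q(w+k\xi)$ and substituting $f=-\omega y$ from \eqref{GrindEQ__34_}, all control terms collapse to $f\cdot z-qf(w+k\xi)=f\cdot y=-\omega y^2\le 0$. This leaves $\dot V=-\delta\mu\int_0^L hv_x^2\,dx-D_W-\omega y^2+2qk^2\xi w+qkw^2$ (discarding the favorable capillary term). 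The indefinite terms $2qk^2\xi w+qkw^2$ must now be absorbed, and this is the main obstacle, since $\xi$ is damped only indirectly through the cascade $\dot\xi=w$ and $+qkw^2$ is genuinely destabilizing unless $k$ is small. Here I would invoke the Wirtinger/Poincar\'e inequalities $\int_0^L v^2\le(L^2/\pi^2)\int_0^L v_x^2$ (as $v$ vanishes at both ends) and $\int_0^L(h-h^*)^2\le(L^2/\pi^2)\int_0^L h_x^2$ (as $h-h^*$ has zero mean by \eqref{GrindEQ__10_}), which is the source of $\pi^2$ in \eqref{GrindEQ__33_}, together with $\int hv_x^2\ge Q_1(r)\int v_x^2$ from the bound $h\ge Q_1(r)$ of Lemma \ref{lemma1}, the source of $Q_1(r)$ in \eqref{GrindEQ__33_}. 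After splitting $2qk^2\xi w$ by Young's inequality, $\dot V$ becomes a quadratic form in $(\xi,w,\|v\|_2,\|h-h^*\|_2)$; the gain condition \eqref{GrindEQ__32_}, $k<q\theta(r)$, is exactly the threshold rendering this form negative definite with a uniform rate, yielding $\dot V\le -2\lambda V$ for some $\lambda>0$.

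Finally I would integrate to obtain $V(t)\le\exp(-2\lambda t)V(0)$ and convert to the $X$-norm. For the upper direction I would use Proposition \ref{proposition1} together with the elementary bound $0\le\sqrt{1+s^2}-1\le s^2/2$ to get a quadratic estimate $V(0)\le C\,\|(\xi(0),w(0),h[0]-h^*\chi_{[0,L]},v[0])\|_X^2$ on $X_V(r)$ (the quadratic bound is what makes the final estimate linear in the initial norm). For the lower (coercivity) direction I would note that, pointwise, $\tfrac{\delta+1}{2}hv^2+\mu v h_x+\tfrac{\mu^2}{2h}h_x^2$ is a positive-definite quadratic form in $(v,h_x)$ precisely because $\delta>0$; combined with $Q_1(r)\le h\le Q_2(r)$, the term $\tfrac{(\delta+1)g}{2}\|h-h^*\chi_{[0,L]}\|_2^2$, and the positive-definite form $\tfrac{qk^2}{2}\xi^2+\tfrac{q}{2}(w+k\xi)^2$, this gives $V\ge c_0\,\|(\xi,w,h-h^*\chi_{[0,L]},v)\|_X^2$ on $X_V(r)$. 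Chaining $c_0\|\cdot(t)\|_X^2\le V(t)\le e^{-2\lambda t}V(0)\le Ce^{-2\lambda t}\|\cdot(0)\|_X^2$ and taking square roots yields \eqref{GrindEQ__35_} with $\lambda$ the Lyapunov rate and $M=\sqrt{C/c_0}$, completing the proof.
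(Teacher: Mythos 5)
Your proposal is correct and is essentially the paper's own argument: the same CLF $V=\delta E+W+\tfrac{qk^{2}}{2}\xi^{2}+\tfrac{q}{2}\left(w+k\xi\right)^{2}$, the same energy identities (Lemma \ref{lemma2}), the same cancellation $f\cdot z-qf\left(w+k\xi\right)=-\omega y^{2}$ with the capillary terms entering only through the sign-definite term $-\mu\sigma\int_{0}^{L}h_{xx}^{2}\left(1+h_{x}^{2}\right)^{-3/2}dx$, the Wirtinger-based absorption with $h\ge Q_{1}(r)$ that produces \eqref{GrindEQ__33_} (Lemma \ref{lemma5}), coercivity of $V$ with respect to the dissipation (Lemma \ref{lemma3}), and the two-sided quadratic bounds relating $V$ to $\left\|\,\cdot\,\right\|_{X}^{2}$ (Lemma \ref{lemma4}). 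The one step you state too loosely is the invariance bootstrap: since the dissipation inequality \eqref{GrindEQ__41_} is available only while $V(t)<R$, one cannot simply assert $V(t)\le r$ ``once $\dot V\le 0$ is shown'' but must, as the paper does, pick $\overline{r}\in\left(r,R\right)$ satisfying \eqref{GrindEQ__71_} and run the continuity/contradiction argument (also, your parenthetical about an ``indefinite viscous contribution of $W$'' is inaccurate --- $\tfrac{d}{dt}W$ contains no kinetic viscous term, and $\delta>0$ is needed because $W$ alone supplies no $\int_{0}^{L}hv_{x}^{2}\,dx$ dissipation and because the pointwise form in $(v,h_{x})$ has discriminant $-\delta\mu^{2}$); neither point affects the validity of the route.
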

\textbf{Remarks on Theorem \ref{theorem1}.}

\noindent 1) The arbitrary quantities $\omega ,k,q,\delta >0$ are the control parameters. We should point out that the ratio $k/q$ must be sufficiently small due to \eqref{GrindEQ__32_}, and this is the only restriction for the control parameters.

\noindent 2) The set $X_{V} (r)$ is the set for which exponential stabilization is achieved. As indicated by Proposition \ref{proposition1}, the set $X_{V} \left(r\right)$ for $r>0$ contains a neighborhood of  $\left(0,0,h^{*} \chi _{[0,L]} ,0\right)$ (in the topology of $X$ with metric induced by the norm $\left\| \textrm{ } \right\| _{X} $ defined by \eqref{GrindEQ__28_}). The size of the set $X_{V} (r)$ depends on $r\in \left[0,R\right)$  and on $\delta ,q,k$ (recall \eqref{GrindEQ__26_} and \eqref{GrindEQ__18_}). It is straightforward to see that  the larger the parameter $q$ (or $k$)  the smaller the set $X_{V} \left(r\right)$. However, the dependence of $X_{V} (r)$ on $\delta $ (through the dependence of $R$ on $\delta $) is not clear. On the contrary it is a very complicated, non-monotonic dependence.

\noindent 3) The feedback law \eqref{GrindEQ__34_} only requires the measurement of the four following quantities:

\begin{itemize}
\item  the position of the tank denoted by $\xi (t)$, and the velocity of the tank denoted by $w(t)$,

\item  the total momentum of the liquid, i.e., the quantity $\displaystyle{\int _{0}^{L}h(t,x)v(t,x)dx} $, and

\item  the difference the liquid level at the tank walls, i.e., the quantity $h(t,L)-h(t,0)$.
\end{itemize}

\noindent It should be emphasized that the feedback law \eqref{GrindEQ__34_} does not require the measurement of the whole liquid level and liquid velocity profile whereas it is completely independent of the surface tension coefficient. 

\noindent 4) The feedback law \eqref{GrindEQ__34_} is the same feedback law that was used in \cite{13,15}. When the results in \cite{13,15} and Theorem \ref{theorem1} are taken into account then it follows that the feedback law \eqref{GrindEQ__34_} guarantees robustness with respect to surface tension as well as robustness with respect to wall friction forces. From a control point of view, this is an ideal situation: the feedback law \eqref{GrindEQ__34_} is robust with respect to all possible perturbations of the basic model, its measurement requirements are minimal and guarantees exponential stabilization of the corresponding closed-loop (nonlinear; not the linearized) system. 

\noindent 5) In contrast with \cite{15}, Theorem \ref{theorem1} does not provide an estimate for the norm $\|v_{x}[t]\|_{2}$, and consequently an estimate for the sup-norm of the fluid velocity. A topic for future research is the contruction of an appropriate CLF based on which an estimate for the norm $\|v_{x}[t]\|_{2}$ can be obtained.

\section{Proofs} \label{section4}
\begin{proof}[Proof of Lemma \ref{lemma1}]
The proof is exactly the same with the proof of Lemma 1 in \cite{15}. The only difference is that here we can obtain an additional estimate for $\left\| h-h^{*} \chi _{[0,L]} \right\| _{\infty } $. Indeed, due to the fact that the function $\varphi :\mathbb{R} _{+} \to \mathbb{R}_{+} $defined by
\begin{equation}
\varphi (s)=\sqrt{s^{2} +1} -1, \textrm{ for } s\ge 0
\end{equation}
is increasing and convex, we can use Jensen's inequality (see page 120 in \cite{16}) and get for all $h\in C^{0} \left([0,L];(0,+\infty )\right)\cap H^{1} (0,L)$ with $\displaystyle{\int _{0}^{L}h(x)dx =m}$:
\begin{gather} 
\varphi \left(\frac{1}{L} \left\| h'\right\| _{1} \right)= \varphi \left(\frac{1}{L} \int _{0}^{L}\left|h'(x)\right|dx \right) \nonumber \\
\le \frac{1}{L} \int _{0}^{L}\varphi \left(\left|h'(x)\right|\right)dx  
= \frac{1}{L} \int _{0}^{L}\left(\sqrt{\left(h'(x)\right)^{2} +1} -1\right)dx   \label{GrindEQ__42_} 
\end{gather} 
Using \eqref{GrindEQ__42_}, the inequality $\left\| h-h^{*} \chi _{[0,L]} \right\| _{\infty } \le \left\| h'\right\| _{1} $ (which is a direct consequence of the fact that there exists $x^{*} \in [0,L]$ such that $h(x^{*} )=h^{*} $; a consequence of continuity of $h$, the mean value theorem and the facts that $\displaystyle{\int _{0}^{L}h(x)dx =m}$, $h^{*} =m/L$), the fact that the function $\varphi ^{-1} :\mathbb{R} _{+} \to \mathbb{R}_{+} $ (the inverse function of $\varphi $) is increasing with $\varphi ^{-1} (s)=\sqrt{\left(s+1\right)^{2} -1} $ for $s\ge 0$ and the inequality  
\begin{equation}
\int _{0}^{L}\left(\sqrt{\left(h'(x)\right)^{2} +1} -1\right)dx \le \frac{V(\xi ,w,h,v)}{\sigma (\delta +1)}
\end{equation}  
\noindent which is a direct consequence of definitions \eqref{GrindEQ__18_}, \eqref{GrindEQ__19_}, \eqref{GrindEQ__20_}, we get for all $(\xi ,w,h,v)\in S$:
\begin{equation} \label{GrindEQ__43_} 
\left\| h-h^{*} \chi _{[0,L]} \right\| _{\infty } \le \sqrt{\left(L+\frac{V(\xi ,w,h,v)}{\sigma (\delta +1)} \right)^{2} -L^{2} }  
\end{equation} 
Using the additional estimate \eqref{GrindEQ__43_} in conjunction with the estimates shown in the proof of Lemma 1 in \cite{15} and definitions \eqref{GrindEQ__22_}, \eqref{GrindEQ__24_a} and \eqref{GrindEQ__24_b} we get  \eqref{GrindEQ__23_} . 

\noindent The proof is complete. 
\end{proof}
\begin{proof}[Proof of Proposition \ref{proposition1}]
Consider arbitrary $(\xi,$ $w, h, v)\in S$ satisfying \eqref{epsilon_cond1} and \eqref{epsilon_cond2}.
Definitions \eqref{GrindEQ__18_}, \eqref{GrindEQ__19_}, \eqref{GrindEQ__20_} and the inequalities 
\begin{align}
(h(x)v(x)+\mu h'(x))^{2} &\le 2h^{2} (x)v^{2} (x)+2\mu ^{2} \left(h'(x)\right)^{2}, \label{inequality_1} \\
\left(w+k\xi \right)^{2} &\le 2w^{2} +2k^{2} \xi ^{2} , \label{inequality_2} \\
\sqrt{1+\left(h'(x)\right)^{2} } -1 &\le \left|h'(x)\right| \label{inequality_3}
\end{align}
imply: 
\begin{align} \label{GrindEQ__44_}  
V\left(\xi ,w,h,v\right) \le \frac{\delta +2}{2} \int _{0}^{L}h(x)v^{2} (x)dx 
+\mu ^{2} \int _{0}^{L}h^{-1} (x)\left(h'(x)\right)^{2} dx \nonumber \\ 
+\frac{\delta +1}{2} g\left\| h-h^{*} \chi _{[0,L]} \right\| _{2}^{2} 
+\frac{3qk^{2} }{2} \xi ^{2} +qw^{2} +\sigma (\delta +1)\left\| h'\right\| _{1}    
\end{align}  
\noindent Following the arguments of the proof of Proposition 2.5 in \cite{15} we obtain from \eqref{GrindEQ__44_} the following:
\begin{gather}  
V\left(\xi ,w,h,v\right)  
\le \frac{\delta +2}{2} H_{\max } \left\| v\right\| _{2}^{2} +qw^{2} +\frac{3qk^{2} }{2} \xi ^{2}  \nonumber \\
+\mu ^{2} \left(h^{*} -\varepsilon \sqrt{L} \right)^{-1} \left\| h'\right\| _{2}^{2} 
 +\frac{\delta +1}{2} g\left\| h-h^{*} \chi _{[0,L]} \right\| _{2}^{2} 
+\sigma (\delta +1)\sqrt{L} \left\| h'\right\| _{2} \label{GrindEQ__47_}  
\end{gather} 
\noindent Inequality \eqref{GrindEQ__31_} is a direct consequence of \eqref{GrindEQ__47_} and definition \eqref{GrindEQ__28_}. The proof is complete.
\end{proof}

In order to give the proof of the main result of this study, we need to provide some preliminary lemmas along with their proofs.

\begin{lemma} \label{lemma2}
Every classical solution of the system \eqref{GrindEQ__6_}-\eqref{GrindEQ__9_}, \eqref{GrindEQ__10_}, \eqref{GrindEQ__12_} satisfies the following equations for all $t>0$:
\begin{align} \label{GrindEQ__36_} 
\frac{d}{dt} E(h[t],v[t])=-\mu \int _{0}^{L}h(t,x) v_{x}^{2} (t,x)dx +f(t)\int _{0}^{L}h(t,x) v(t,x)dx 
\end{align} 
\begin{gather} 
\frac{d}{dt} W(h[t],v[t]) 
=-\mu g\left\| h_{x} [t]\right\| _{2}^{2}  
-\mu \sigma \int _{0}^{L}\frac{h_{xx}^{2} (t,x)dx}{\left(1+h_{x}^{2} (t,x)\right)^{3/2} } \nonumber  \\
+f(t)\int _{0}^{L}\left(h(t,x)v(t,x)+\mu h_{x} (t,x)\right)dx \label{GrindEQ__37_}   
\end{gather} 
where $E,W$ are defined by \eqref{GrindEQ__19_}, \eqref{GrindEQ__20_}, respectively.
\end{lemma}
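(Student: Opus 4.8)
The plan is to prove both identities by differentiating $E$ and $W$ under the integral sign and repeatedly integrating by parts, exploiting the two boundary conditions $v(t,0)=v(t,L)=0$ from \eqref{GrindEQ__9_} and $h_{x}(t,0)=h_{x}(t,L)=0$ from \eqref{GrindEQ__12_} to kill all boundary contributions. Throughout I will use the continuity equation \eqref{GrindEQ__7_} in the form $h_{t}=-(hv)_{x}$, the momentum equation \eqref{GrindEQ__8_}, and the algebraic identity $\left(h_{x}/\sqrt{1+h_{x}^{2}}\right)_{x}=h_{xx}/(1+h_{x}^{2})^{3/2}$, which lets me pass between the capillary potential in \eqref{GrindEQ__19_}--\eqref{GrindEQ__20_} and the third-order term in \eqref{GrindEQ__8_}.

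For \eqref{GrindEQ__36_}, I would split $E$ into kinetic, gravitational and capillary parts. For the kinetic part I write $hv_{t}=(hv)_{t}-h_{t}v$ so that $\frac{d}{dt}\frac12\int_{0}^{L}hv^{2}\,dx=\int_{0}^{L}v(hv)_{t}\,dx-\frac12\int_{0}^{L}h_{t}v^{2}\,dx$, then substitute $(hv)_{t}$ from \eqref{GrindEQ__8_} and $h_{t}$ from \eqref{GrindEQ__7_} and integrate by parts. The convective terms $\int_{0}^{L}hv^{2}v_{x}\,dx$ cancel; the pressure term produces $\frac{g}{2}\int_{0}^{L}h^{2}v_{x}\,dx$, which after one integration by parts exactly cancels the derivative of the gravitational potential $\frac{g}{2}\|h-h^{*}\chi_{[0,L]}\|_{2}^{2}$; the capillary term produces $\sigma\int_{0}^{L}vh\kappa_{x}\,dx$ (with $\kappa:=h_{xx}/(1+h_{x}^{2})^{3/2}$), which cancels the derivative of the capillary potential; and the viscous term leaves $-\mu\int_{0}^{L}hv_{x}^{2}\,dx$, with the forcing contributing $f\int_{0}^{L}hv\,dx$. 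This yields \eqref{GrindEQ__36_}.

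For \eqref{GrindEQ__37_} the new ingredient is the term $\frac12\int_{0}^{L}h^{-1}(hv+\mu h_{x})^{2}\,dx$. Introducing $z:=hv+\mu h_{x}$, I first derive its evolution equation by combining \eqref{GrindEQ__8_} with $h_{xt}=-(hv)_{xx}$, obtaining $z_{t}=(hv)_{t}-\mu(hv)_{xx}=-(hv^{2}+\frac12 gh^{2})_{x}+\sigma h\kappa_{x}-\mu(h_{x}v)_{x}+hf$. Writing $\frac{d}{dt}\frac12\int_{0}^{L}h^{-1}z^{2}\,dx=\int_{0}^{L}h^{-1}zz_{t}\,dx-\frac12\int_{0}^{L}h^{-2}h_{t}z^{2}\,dx$ and substituting, the crucial cancellations are: the pressure contribution $\int_{0}^{L}h^{-1}z(-ghh_{x})\,dx=-g\int_{0}^{L}(hv+\mu h_{x})h_{x}\,dx$ splits into a term cancelling the gravitational-potential derivative and the dissipation $-\mu g\|h_{x}\|_{2}^{2}$; the capillary contribution $\sigma\int_{0}^{L}z\kappa_{x}\,dx$ splits, after integrating $\sigma\mu\int_{0}^{L}h_{x}\kappa_{x}\,dx$ by parts (using $h_{x}=0$ at the endpoints and the identity for $\kappa$), into a term cancelling the capillary-potential derivative and the dissipation $-\mu\sigma\int_{0}^{L}h_{xx}^{2}(1+h_{x}^{2})^{-3/2}\,dx$; and the forcing gives $f\int_{0}^{L}(hv+\mu h_{x})\,dx$, with $\int_{0}^{L}\mu h_{x}\,dx=\mu(h(t,L)-h(t,0))$.

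I expect the main obstacle to be the bookkeeping in \eqref{GrindEQ__37_}: one must verify that the genuinely convective remainders coming from $-(hv^{2})_{x}$, from $-\mu(h_{x}v)_{x}$, and from the weighted term $-\frac12\int_{0}^{L}h^{-2}h_{t}z^{2}\,dx$ all cancel, which requires several integrations by parts with the $h^{-1}$ and $h^{-2}$ weights and careful tracking that every boundary term vanishes by virtue of $v=0$ and $h_{x}=0$ at $x=0,L$. Since this is precisely the computation carried out for the friction model in \cite{15} with the capillary terms added, I would organize it so that the capillary contributions are handled by the single identity for $\kappa$ together with the boundary condition $h_{x}=0$, keeping the remaining algebra parallel to the treatment of \eqref{GrindEQ__36_}.
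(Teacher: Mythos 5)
Your proposal is correct and follows essentially the same route as the paper's proof: your auxiliary quantity $z=hv+\mu h_{x}$ is exactly the paper's $\varphi$ in \eqref{GrindEQ__52_}, your evolution equation $z_{t}=-(vz)_{x}-ghh_{x}+\sigma h\kappa_{x}+hf$ is precisely \eqref{GrindEQ__53_}, and the cancellations you identify (convective remainders vanishing under the $h^{-1}$, $h^{-2}$ weights, pressure against the gravitational potential leaving $-\mu g\|h_{x}\|_{2}^{2}$, and $\sigma\mu\int_{0}^{L}h_{x}\kappa_{x}\,dx$ yielding the capillary dissipation via $h_{x}(t,0)=h_{x}(t,L)=0$) are exactly the integrations by parts the paper carries out in \eqref{GrindEQ__50_}--\eqref{GrindEQ__57_}, with your identity $\bigl(h_{x}/\sqrt{1+h_{x}^{2}}\bigr)_{x}=\kappa$ equivalent to the paper's $h\kappa_{x}=\bigl((1+h_{x}^{2}+hh_{xx})(1+h_{x}^{2})^{-3/2}\bigr)_{x}$. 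The only cosmetic difference is that for \eqref{GrindEQ__36_} the paper first passes to the nonconservative velocity equation \eqref{GrindEQ__49_} while you work directly with $hv_{t}=(hv)_{t}-h_{t}v$; the computations are identical in content.
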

\begin{proof}
\noindent Due to \eqref{GrindEQ__7_} and \eqref{GrindEQ__8_} we get for $t>0$, $x\in \left(0,L\right)$: 
\begin{gather}   
v_{t} (t,x)+v(t,x)v_{x} (t,x)+gh_{x} (t,x) \nonumber \\
=\sigma h^{-1} (t,x)\left(\frac{1+h_{x}^{2} (t,x)+h(t,x)h_{xx} (t,x)}{\left(1+h_{x}^{2} (t,x)\right)^{3/2} } \right)_{x} \nonumber \\
+\mu h^{-1} (t,x)\left(h(t,x)v_{x} (t,x)\right)_{x} +f(t) \label{GrindEQ__49_}
\end{gather} 
\noindent Combining definition \eqref{GrindEQ__19_}, \eqref{GrindEQ__7_} and \eqref{GrindEQ__49_} we get for all $t>0$ the following expression for the time derivative of the functional \eqref{GrindEQ__19_} :
\begin{gather}  
\frac{d}{dt} E(h[t],v[t]) 
= -\frac{1}{2} \int _{0}^{L}(h(t,x)v(t,x))_{x} v^{2} (t,x)dx \nonumber \\
-\int _{0}^{L}h(t,x)v^{2} (t,x)v_{x} (t,x)dx  
-g\int _{0}^{L}h(t,x)v(t,x)h_{x} (t,x)dx \nonumber \\
+\sigma \int _{0}^{L}v(t,x) \left(\frac{1+h_{x}^{2} (t,x)+h(t,x)h_{xx} (t,x)}{\left(1+h_{x}^{2} (t,x)\right)^{3/2} } \right)_{x} dx \nonumber  \\ 
+\mu \int _{0}^{L}v(t,x)\left(h(t,x)v_{x} (t,x)\right)_{x} dx 
+f(t)\int _{0}^{L}h(t,x)v(t,x)dx \nonumber \\ 
-g\int _{0}^{L}(h(t,x)v(t,x))_{x} (h(t,x)-h^{*} )dx  \nonumber \\
-\sigma \int _{0}^{L}\frac{h_{x} (t,x)}{\sqrt{1+h_{x}^{2} (t,x)} } (h(t,x)v(t,x))_{xx} dx  \label{GrindEQ__50_}
\end{gather} 
\noindent Using \eqref{GrindEQ__50_}, integration by parts as in the proof of Lemma 2.11 in \cite{15}, \eqref{GrindEQ__9_}, \eqref{GrindEQ__12_} and the fact that for all $t>0$ 
\begin{align}  
&\sigma \int _{0}^{L}v(t,x)\left(\frac{1+h_{x}^{2} (t,x)+h(t,x)h_{xx} (t,x)}{\left(1+h_{x}^{2} (t,x)\right)^{3/2} } \right)_{x} dx  \nonumber \\ 
&=-\sigma \int _{0}^{L}v_{x} (t,x)\frac{1+h_{x}^{2} (t,x)+h(t,x)h_{xx} (t,x)}{\left(1+h_{x}^{2} (t,x)\right)^{3/2} } dx  \label{GrindEQ__51_a} \\
&-\sigma \int _{0}^{L}\frac{h_{x} (t,x)}{\sqrt{1+h_{x}^{2} (t,x)} } (h(t,x)v(t,x))_{xx} dx  \nonumber \\
&=\sigma \int _{0}^{L}v_{x} (t,x)\frac{1+h_{x}^{2} (t,x)+h_{xx} (t,x)h(t,x)}{\left(1+h_{x}^{2} (t,x)\right)^{3/2} } dx  \label{GrindEQ__51_}
\end{align} 
as a consequence of integration by parts as well, we obtain equation \eqref{GrindEQ__36_}.
\\
\\ 
Next we define for all $t\ge 0$ and $x\in [0,L]$:
\begin{equation} \label{GrindEQ__52_} 
\varphi (t,x):=h(t,x)v(t,x)+\mu h_{x} (t,x) 
\end{equation} 

\noindent Definition \eqref{GrindEQ__52_}, \eqref{GrindEQ__7_} and \eqref{GrindEQ__8_} imply for all $t>0$ and $x\in (0,L)$:
\begin{gather} 
\varphi _{t} (t,x)=-\Biggl (v(t,x)\varphi (t,x)+\frac{1}{2} gh^{2} (t,x) 
-\sigma \frac{1+h_{x}^{2} (t,x)+h(t,x)h_{xx} (t,x)}{\left(1+h_{x}^{2} (t,x)\right)^{3/2} } \Biggl )_{x}  \nonumber \\
+h(t,x)f(t) \label{GrindEQ__53_}
\end{gather}
Using definition \eqref{GrindEQ__20_} along with \eqref{GrindEQ__53_} and \eqref{GrindEQ__7_}, we get for all $t>0$ :
\begin{gather}  
\frac{d}{dt} W(h[t],v[t]) 
=\frac{1}{2} \int _{0}^{L}h^{-2} (t,x)\varphi ^{2} (t,x)(h(t,x)v(t,x))_{x} dx  \nonumber \\
-\int _{0}^{L}h^{-1} (t,x)\varphi (t,x) \left(\varphi (t,x)v(t,x)+\frac{1}{2} gh^{2} (t,x)\right)_{x} dx  \nonumber \\ 
+\sigma \int _{0}^{L}h^{-1} (t,x) \varphi (t,x) \left(\frac{1+h_{x}^{2} (t,x)+h(t,x)h_{xx} (t,x)}{\left(1+h_{x}^{2} (t,x)\right)^{3/2} } \right)_{x} dx \nonumber  \\
+f(t)\int _{0}^{L}\varphi (t,x)dx   
-g\int _{0}^{L}(h(t,x)-h^{*} )(h(t,x)v(t,x))_{x} dx \nonumber \\
-\sigma \int _{0}^{L}\frac{h_{x} (t,x)(h(t,x)v(t,x))_{xx} }{\sqrt{1+h_{x}^{2} (t,x)} } dx \label{GrindEQ__55_}
\end{gather} 
\noindent Using \eqref{GrindEQ__9_} and integration by parts as in proof of Lemma 2.11 in \cite{15}, we obtain from \eqref{GrindEQ__55_} and definition \eqref{GrindEQ__52_} for all $t>0$: 
\begin{gather} 
\frac{d}{dt} W(h[t],v[t]) 
=-\mu g\left\| h_{x} [t]\right\| _{2}^{2} 
+f(t)\int _{0}^{L}\left(h(t,x)v(t,x)+\mu h_{x} (t,x)\right)dx  \nonumber \\ 
+\sigma \int _{0}^{L}v(t,x) \left(\frac{1+h_{x}^{2} (t,x)+h(t,x)h_{xx} (t,x)}{\left(1+h_{x}^{2} (t,x)\right)^{3/2} } \right)_{x} dx  \nonumber \\
+\mu \sigma \int _{0}^{L}h^{-1} (t,x)h_{x} (t,x) \left(\frac{1+h_{x}^{2} (t,x)+h(t,x)h_{xx} (t,x)}{\left(1+h_{x}^{2} (t,x)\right)^{3/2} } \right)_{x} dx  \nonumber  \\
-\sigma \int _{0}^{L}\frac{h_{x} (t,x)(h(t,x)v(t,x))_{xx} }{\sqrt{1+h_{x}^{2} (t,x)} } dx \label{GrindEQ__57_}    
\end{gather} 
\noindent Using \eqref{GrindEQ__12_}, \eqref{GrindEQ__51_a},  \eqref{GrindEQ__51_} and the fact that 
\begin{align}
h(t,x)\left(\frac{h_{xx} (t,x)}{\left(1+h_{x}^{2} (t,x)\right)^{3/2} } \right)_{x} =\left(\frac{1+h_{x}^{2} (t,x)+h(t,x)h_{xx} (t,x)}{\left(1+h_{x}^{2} (t,x)\right)^{3/2} } \right)_{x}  
\end{align}
\noindent we obtain from \eqref{GrindEQ__57_} equation \eqref{GrindEQ__37_} for all $t>0$. The proof is complete. 
\end{proof}
\begin{lemma} \label{lemma3}
Let constants $q,k,\delta >0$ be given. Then there exists a non-decreasing function $\Lambda :[0,R)\to (0,+\infty )$, where $R>0$ is defined by \eqref{GrindEQ__26_} such that for every $(\xi ,w,h,v)\in X$ with $v\in H^{1} (0,L)$, $h\in H^{2} (0,L)$ and $V(\xi ,w,h,v)<R$, the following inequality holds:
\begin{gather} 
\frac{V(\xi ,w,h,v)}{\Lambda (V(\xi ,w,h,v))} 
 \le  \left\| h'\right\| _{2}^{2} +\int _{0}^{L}\frac{\left(h''(x)\right)^{2} }{\left(1+\left(h'(x)\right)^{2} \right)^{3/2} } dx \nonumber  \\
+\int _{0}^{L}h(x)\left(v'(x)\right)^{2} dx +\xi ^{2} +\left(w+k\xi \right)^{2} \label{GrindEQ__38_}   
\end{gather} 
\end{lemma}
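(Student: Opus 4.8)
Write $V:=V(\xi,w,h,v)$. The plan is to bound each of the six nonnegative summands of $V$ defined through \eqref{GrindEQ__18_}, \eqref{GrindEQ__19_}, \eqref{GrindEQ__20_} (the two kinetic terms, the gravitational potential term, the surface-tension potential term, and the two mechanical terms $\tfrac{qk^2}{2}\xi^2$ and $\tfrac{q}{2}(w+k\xi)^2$) from above by a nonnegative combination of the quantities on the right-hand side of \eqref{GrindEQ__38_}, with coefficients that are non-decreasing functions of $V$. Summing and dividing by the resulting maximal coefficient then yields \eqref{GrindEQ__38_} with $\Lambda$ equal to that maximum. Throughout I would use that $V<R$ together with \eqref{GrindEQ__27_} gives the two-sided pointwise bound $0<Q_1(V)\le h(x)\le Q_2(V)<H_{\max}$ for all $x\in[0,L]$.

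I would first dispose of the three easy terms. The mechanical terms are already constant multiples of $\xi^2$ and $(w+k\xi)^2$. For the surface-tension potential I would use the elementary inequality $\sqrt{1+s^2}-1\le\tfrac12 s^2$ to get $(\delta+1)\sigma\int_0^L(\sqrt{1+(h'(x))^2}-1)\,dx\le\tfrac{(\delta+1)\sigma}{2}\|h'\|_2^2$. For the gravitational potential I would exploit the existence of a point $x^*$ with $h(x^*)=h^*$ (continuity plus the mass constraint \eqref{GrindEQ__10_}), which gives $\|h-h^*\chi_{[0,L]}\|_\infty\le\|h'\|_1\le\sqrt{L}\,\|h'\|_2$ and hence $\tfrac{(\delta+1)g}{2}\|h-h^*\chi_{[0,L]}\|_2^2\le\tfrac{(\delta+1)gL^2}{2}\|h'\|_2^2$. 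All three are thus controlled by constant multiples of the right-hand side of \eqref{GrindEQ__38_}.

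The heart of the argument, and the step I expect to be the main obstacle, is the two kinetic terms. Since $v(0)=v(L)=0$, the Poincar\'e (Wirtinger) inequality $\|v\|_2^2\le\tfrac{L^2}{\pi^2}\|v'\|_2^2$ is available, and combining it with the pointwise bounds on $h$ gives
\begin{equation*}
\int_0^L h v^2\,dx\le Q_2(V)\|v\|_2^2\le\frac{Q_2(V)L^2}{\pi^2}\|v'\|_2^2\le\frac{H_{\max}L^2}{\pi^2 Q_1(V)}\int_0^L h(v')^2\,dx,
\end{equation*}
where the last step uses $\|v'\|_2^2\le Q_1(V)^{-1}\int_0^L h(v')^2\,dx$. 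For the $W$-kinetic term I would first expand $h^{-1}(hv+\mu h')^2\le 2hv^2+2\mu^2 h^{-1}(h')^2$ and then apply the previous estimate together with $\int_0^L h^{-1}(h')^2\,dx\le Q_1(V)^{-1}\|h'\|_2^2$. This is exactly where the denominator $Q_1(V)$ enters, forcing the coefficient to depend on $V$; it is also the reason the hypothesis $V<R$ (which guarantees $Q_1(V)>0$) is indispensable.

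Collecting all bounds, I would obtain
\begin{equation*}
V\le\Lambda(V)\Big(\|h'\|_2^2+\int_0^L h(v')^2\,dx+\xi^2+(w+k\xi)^2\Big)
\end{equation*}
with
\begin{equation*}
\Lambda(s):=\max\left(\frac{(\delta+2)H_{\max}L^2}{2\pi^2 Q_1(s)},\ \frac{\mu^2}{Q_1(s)}+\frac{(\delta+1)gL^2}{2}+\frac{(\delta+1)\sigma}{2},\ \frac{qk^2}{2},\ \frac{q}{2}\right).
\end{equation*}
Since the surface-tension dissipation term $\int_0^L(h''(x))^2(1+(h'(x))^2)^{-3/2}\,dx$ in \eqref{GrindEQ__38_} is nonnegative, discarding it only weakens the right-hand side, so the displayed inequality implies \eqref{GrindEQ__38_}. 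Finally, because $Q_1$ is decreasing and strictly positive on $[0,R)$ (recall Remark 1 and \eqref{GrindEQ__27_}), the map $s\mapsto Q_1(s)^{-1}$ is non-decreasing, whence $\Lambda:[0,R)\to(0,+\infty)$ is non-decreasing, as required.
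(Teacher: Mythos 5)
Your proposal is correct and follows essentially the same route as the paper's proof, which simply delegates the detailed estimates to Lemma 2.12 of \cite{15}: the same Wirtinger inequality $\|v\|_2^2\le \tfrac{L^2}{\pi^2}\|v'\|_2^2$ combined with the pointwise bounds $Q_1(V)\le h(x)\le Q_2(V)$ from \eqref{GrindEQ__27_}, the same bound of the surface-tension potential by a multiple of $\|h'\|_2^2$ (the paper uses $\sqrt{1+s^2}-1\le s^2$, you the sharper $\tfrac12 s^2$), the same discarding of the nonnegative $h''$ integral, and the same monotonicity argument ($Q_1$ decreasing, $Q_2$ increasing) to conclude that $\Lambda$ is non-decreasing. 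The only deviations are cosmetic: you replace $Q_2(s)$ by its upper bound $H_{\max}$ in the kinetic coefficient, giving a slightly coarser but equally valid $\Lambda$ than the paper's $\Lambda(s)=\tfrac12\max\bigl(\kappa_1+\tfrac{2\mu^2}{Q_1(s)},\tfrac{\kappa_2 Q_2(s)}{Q_1(s)},\kappa_3\bigr)$.
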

\begin{proof}
Let arbitrary $(\xi ,w,h,v)\in X$ with $v\in H^{1} (0,L)$, $h\in H^{2} (0,L)$ and $V(\xi ,w,h,v)<R$ be given. Using the same arguments as in the proof of Lemma 2.12 in \cite{15}
and the fact that 
\begin{equation} \label{GrindEQ__61_} 
\int _{0}^{L}\left(\sqrt{1+\left(h'(x)\right)^{2} } -1\right)dx \le \left\| h'\right\| _{2}^{2}  
\end{equation} 
we obtain the following estimate:
\begin{gather}  
V(\xi ,w,h,v) 
\le  \frac{L^{2} \left(\delta +2\right)Q_{2} (V(\xi ,w,h,v))}{2\pi ^{2} Q_{1} (V(\xi ,w,h,v))} \int _{0}^{L}h(x)\left(v'(x)\right)^{2} dx \nonumber  \\
+\left(\frac{(\delta +1)\left(gL^{2} +2\sigma \right)}{2} +\frac{\mu ^{2} }{Q_{1} \left(V(\xi ,w,h,v)\right)} \right) \left\| h'\right\| _{2}^{2} 
+\frac{qk^{2} }{2} \xi ^{2} +\frac{q}{2} \left(w+k\xi \right)^{2} \nonumber \\
\le \Lambda (V(\xi ,w,h,v)) \nonumber \\
\times \bigg(\left\| h'\right\| _{2}^{2} +\int _{0}^{L}\hspace{-0.7em}\frac{\left(h''(x)\right)^{2} }{\left(1+\left(h'(x)\right)^{2} \right)^{3/2} } dx  
+\int _{0}^{L}h(x)\left(v'(x)\right)^{2} dx+\xi ^{2} +\left(w+k\xi \right)^{2} \bigg ) \label{GrindEQ__62_}
\end{gather}
\noindent where
\begin{equation} \label{GrindEQ__63_}
\Lambda (s):=\frac{1}{2} \max \left(\kappa _{1} +\frac{2\mu ^{2} }{Q_{1} \left(s\right)} ,\frac{\kappa _{2} Q_{2} (s)}{Q_{1} (s)} ,\kappa _{3} \right), \textrm{ for } s\in [0,R)
\end{equation} 
\noindent with $\kappa _{1} :=(\delta +1)\left(gL^{2} +2\sigma \right)$, $\displaystyle{\kappa _{2} :=L^{2} \left(\delta +2\right)/ \pi ^{2} }$ and $\kappa _{3} :=q\max (1,k^{2} )$. Definition \eqref{GrindEQ__63_} and the fact that $Q_{2} :\mathbb{R} _{+} \to \mathbb{R} $ is an increasing function and $Q_{1} :\mathbb{R}_{+} \to \mathbb{R} $ is a decreasing function imply that $\Lambda :[0,R)\to (0,+\infty )$ is a non-decreasing function. Inequality \eqref{GrindEQ__38_} holds as a direct consequence of \eqref{GrindEQ__62_}. The proof is complete. 
\end{proof}
\begin{lemma} \label{lemma4}
Let constants $q,k,\delta >0$ be given. Then there exist non-decreasing functions $G_{i} :[0,R)\to (0,+\infty )$, $i=1,2$, where $R>0$ is defined by \eqref{GrindEQ__26_}, such that for every $(\xi ,w,h,v)\in X$ with $V(\xi ,w,h,v)<R$, the following inequalities hold:
\begin{align} \label{GrindEQ__39_}  
\left\| (\xi ,w,h-h^{*} \chi _{[0,L]} ,v)\right\| _{X}^{2} 
\le V(\xi ,w,h,v)G_{1} \left(V(\xi ,w,h,v)\right) 
\end{align}
\begin{align} \label{GrindEQ__40_} 
\frac{V(\xi ,w,h,v)}{G_{2} \left(V(\xi ,w,h,v)\right)} \le \left\| (\xi ,w,h-h^{*} \chi _{[0,L]} ,v)\right\| _{X}^{2}  
\end{align}
where $\left\| \, \cdot \, \right\| _{X} $ is defined by \eqref{GrindEQ__28_}. 
\end{lemma}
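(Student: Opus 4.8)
The plan is to establish two-sided comparability between $V$ and the squared deviation norm by bounding each additive piece of one against the corresponding piece of the other, using the pointwise envelope $0<Q_1(V)\le h(x)\le Q_2(V)$ furnished by Lemma~\ref{lemma1} and \eqref{GrindEQ__27_} to convert the weighted integrals $\int hv^2$ and $\int h^{-1}(h')^2$ into the flat $L^2$ norms appearing in \eqref{GrindEQ__28_}. Throughout I would write $s:=V(\xi,w,h,v)$ and recall from Remark~1 that $Q_2$ is increasing and $Q_1$ decreasing, so that any coefficient built from $Q_2(s)$ and $1/Q_1(s)$ is automatically non-decreasing in $s$; this is exactly what makes the resulting $G_1,G_2$ non-decreasing.

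For the upper bound \eqref{GrindEQ__40_}, equivalently $V\le G_2(s)\,\|\cdot\|_X^2$, I would dominate every term of \eqref{GrindEQ__18_} term by term. The quadratic-in-$(\xi,w)$ part follows from \eqref{inequality_2}, and the potential-energy pieces $\tfrac{g}{2}\|h-h^*\chi_{[0,L]}\|_2^2$ contribute the coefficient $\tfrac{(\delta+1)g}{2}$ directly. For the kinetic terms I would use \eqref{inequality_1} together with $h\le Q_2(s)$ and $h^{-1}\le 1/Q_1(s)$ to bound $\tfrac12\int h^{-1}(hv+\mu h')^2$ by $Q_2(s)\|v\|_2^2+\mu^2 Q_1(s)^{-1}\|h'\|_2^2$, and $\tfrac{\delta}{2}\int hv^2$ by $\tfrac{\delta Q_2(s)}{2}\|v\|_2^2$. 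The surface-tension integrals are handled by \eqref{GrindEQ__61_}, which bounds them by $(\delta+1)\sigma\|h'\|_2^2$; this is the crucial point, since the second-order estimate $\sqrt{1+r^2}-1\le r^2/2$ behind \eqref{GrindEQ__61_} keeps the right-hand side purely quadratic, whereas the merely linear bound \eqref{inequality_3} would not. Collecting coefficients yields $G_2(s)=\max\!\big(\tfrac{3qk^2}{2},q,\tfrac{(\delta+1)g}{2},(\delta+1)\sigma+\mu^2 Q_1(s)^{-1},\tfrac{(\delta+2)Q_2(s)}{2}\big)$.

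The lower bound \eqref{GrindEQ__39_}, equivalently $\|\cdot\|_X^2\le s\,G_1(s)$, is the delicate direction, because I must extract $\|h'\|_2^2$ from $V$ even though the surface-tension term only controls $\int(\sqrt{1+(h')^2}-1)$, which grows linearly in $|h'|$ and hence cannot dominate $\|h'\|_2^2$ by itself. The term that does carry $\|h'\|_2^2$ is $\tfrac12\int h^{-1}(hv+\mu h')^2$ inside $W$, but its cross term $\mu\int vh'$ has the wrong sign. The device I would use is the weighted inequality $(a+b)^2\ge(1-\varepsilon)b^2-(\varepsilon^{-1}-1)a^2$ with $a=hv$, $b=\mu h'$ and the precise choice $\varepsilon=1/(\delta+1)$: this gives $W\ge \tfrac{\mu^2\delta}{2(\delta+1)}\int h^{-1}(h')^2-\tfrac{\delta}{2}\int hv^2+(\text{nonneg.})$, whose negative kinetic remainder is cancelled exactly by the $\tfrac{\delta}{2}\int hv^2$ sitting in $\delta E$. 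Hence $\delta E+W\ge\tfrac{\mu^2\delta}{2(\delta+1)Q_2(s)}\|h'\|_2^2$, so that $\|h'\|_2^2\le \tfrac{2(\delta+1)Q_2(s)}{\mu^2\delta}\,s$.

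The remaining components of the lower bound are routine: $\xi^2\le\tfrac{2}{qk^2}s$ and $(w+k\xi)^2\le\tfrac2q s$ come from the last two terms of \eqref{GrindEQ__18_}, whence $w^2\le\tfrac8q s$ via $w^2\le 2(w+k\xi)^2+2k^2\xi^2$; the estimate $\|h-h^*\chi_{[0,L]}\|_2^2\le\tfrac{2}{(\delta+1)g}s$ comes from the two potential energies; and $\|v\|_2^2\le\tfrac{2}{\delta Q_1(s)}s$ comes from $\tfrac{\delta}{2}\int hv^2\le\delta E\le s$ together with $h\ge Q_1(s)$. Summing, $G_1(s)$ is the sum of these coefficients, non-decreasing by the monotonicity of $Q_1$ and $Q_2$. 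The main obstacle is precisely the coercivity step for $\|h'\|_2^2$: it is the reason the functional is assembled as $\delta E+W$ rather than $W$ alone, and making the $\varepsilon$-splitting close for every $\delta>0$ (not merely $\delta\ge1$) is where the exact choice $\varepsilon=1/(\delta+1)$ is essential.
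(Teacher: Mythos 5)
Your proof is correct and takes essentially the same approach as the paper: your derivation of \eqref{GrindEQ__40_} reproduces the paper's estimate \eqref{GrindEQ__64_} term by term, using the same quadratic bound $\sqrt{1+\left(h'(x)\right)^{2} }-1\le \left(h'(x)\right)^{2}$ for the surface-tension integrals and arriving at the same $G_{2}$ (your $Q_{2}(s)$ in place of the paper's $H_{\max }$ is a harmless sharpening, since $Q_{2}(s)<H_{\max }$ by \eqref{GrindEQ__27_}). For \eqref{GrindEQ__39_} the paper simply defers to Lemma 4 of \cite{15}, and your weighted Cauchy--Schwarz splitting with the choice $\varepsilon =1/(\delta +1)$, which cancels the kinetic cross term exactly against the $\frac{\delta }{2} \int _{0}^{L}h(x)v^{2} (x)dx$ contribution of $\delta E$, is precisely the standard coercivity argument being invoked there, so spelling it out is a gain in completeness rather than a different method.
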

\begin{proof}
Let arbitrary $\left(\xi ,w,h,v\right)\in X$ with $V(\xi ,w,h,v)<R$ be given. Using definitions \eqref{GrindEQ__18_}, \eqref{GrindEQ__19_}, \eqref{GrindEQ__20_}, inequalities \eqref{inequality_1}, \eqref{inequality_2}, the inequality
\begin{align}
\sqrt{1+\left(h'(x)\right)^{2} } \le 1+\left(h'(x)\right)^{2}
\end{align} 
and \eqref{GrindEQ__27_} we obtain
\begin{gather}  
V(\xi ,w,h,v) 
\le \frac{\delta +2}{2} H_{\max } \left\| v\right\| _{2}^{2}  +\frac{\delta +1}{2} g\left\| h-h^{*} \chi _{[0,L]} \right\| _{2}^{2}    \nonumber \\ 
+\left(\frac{\mu ^{2} }{Q_{1} \left(V\left(\xi ,w,h,v\right)\right)} +\sigma \left(\delta +1\right)\right)\left\| h'\right\| _{2}^{2}  
+\frac{3qk^{2} }{2} \xi ^{2} +qw^{2} \label{GrindEQ__64_} 
\end{gather} 
\noindent Inequality \eqref{GrindEQ__64_} implies inequality \eqref{GrindEQ__40_} with
\begin{gather}
G_{2} \left(s\right):=\max \left(\frac{\delta +2}{2} H_{\max } ,\frac{\delta +1}{2} g,\frac{\mu ^{2} }{Q_{1} \left(s\right)} +\sigma \left(\delta +1\right),\frac{3qk^{2} }{2} ,q\right), \nonumber \\
\hspace{-2em} \textrm{for } s\in [0,R) 
\end{gather}
\noindent The fact that $Q_{1} :\mathbb{R} _{+} \to \mathbb{R} $ is a decreasing function and the above definition imply that $G_{2} :[0,R)\to (0,+\infty )$ is a non-decreasing function. 

The proof of inequality \eqref{GrindEQ__39_} is exactly the same with the proof of Lemma 4 in \cite{15}. The proof is complete. 
\end{proof}
\begin{lemma} \label{lemma5}
Let constants $\omega ,k,q,\delta >0$ and $r\in [0,R)$ be given, where $R>0$ is defined by \eqref{GrindEQ__26_}. Then every classical solution of the system \eqref{GrindEQ__6_}-\eqref{GrindEQ__9_}, \eqref{GrindEQ__10_}, \eqref{GrindEQ__12_} and \eqref{GrindEQ__34_} satisfies the following inequality for all $t>0$ for which $V(\xi (t),w(t),h[t],v[t])<R$:
\begin{gather} 
\frac{d}{dt} V(\xi (t),w(t),h[t],v[t]) 
\le  -\frac{3\mu g}{4} \left\| h_{x} [t]\right\| _{2}^{2} -qk^{3} \xi ^{2} (t) \nonumber \\
-\frac{\mu \delta }{2H_{\max } } \left(2H_{\max } -Q_{1} (r)\frac{Q_{2} \left(V(t)\right)}{Q_{1} \left(V(t)\right)} \right) \int _{0}^{L}  h(t,x)v_{x}^{2} (t,x)dx \nonumber \\ 
-\mu \sigma \int _{0}^{L}\frac{h_{xx}^{2} (t,x)}{\left(1+h_{x}^{2} (t,x)\right)^{3/2} } dx -q\left(q\theta (r)-k\right)\left(w(t)+k\xi (t)\right)^{2} \label{GrindEQ__41_}   
\end{gather} 
where $V(t)=V(\xi (t),w(t),h[t],v[t])$, $\theta (r)$ is defined by \eqref{GrindEQ__33_} and $Q_{i} :\mathbb{R} _{+} \to \mathbb{R} $ ($i=1,2$) are the functions defined by \eqref{GrindEQ__24_a} and \eqref{GrindEQ__24_b}. 
\end{lemma}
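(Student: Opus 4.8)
The plan is to differentiate $V$ along the closed-loop trajectory and bound the result by the right-hand side of \eqref{GrindEQ__41_}. First I would assemble $\frac{d}{dt}V$ from Lemma \ref{lemma2} together with the direct differentiation of the tank terms. Using $\dot\xi=w$ and $\dot w=-f$, the terms $\frac{qk^2}{2}\xi^2+\frac{q}{2}(w+k\xi)^2$ contribute $2qk^2\xi w+qkw^2-qf(w+k\xi)$, which I rewrite as $qk(w+k\xi)^2-qk^3\xi^2-qf(w+k\xi)$. From \eqref{GrindEQ__36_} and \eqref{GrindEQ__37_}, the quantity $\delta\frac{d}{dt}E+\frac{d}{dt}W$ yields the three dissipation terms plus the control term $f\,A$, where $A:=(\delta+1)\int_0^L hv\,dx+\mu\big(h(L)-h(0)\big)$ (using $\int_0^L h_x\,dx=h(L)-h(0)$). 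The control contributions then combine as $fA-qf(w+k\xi)=fP$ with $P:=A-q(w+k\xi)$, and since the feedback \eqref{GrindEQ__34_} is exactly $f=-\omega P$, this is $-\omega P^2$. Hence
\begin{gather*}
\frac{d}{dt}V = -\delta\mu\int_0^L hv_x^2\,dx - \mu g\|h_x\|_2^2 - \mu\sigma\int_0^L\frac{h_{xx}^2}{(1+h_x^2)^{3/2}}\,dx \\
-\omega P^2 + qk(w+k\xi)^2 - qk^3\xi^2 .
\end{gather*}

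The heart of the argument is to estimate $-\omega P^2+qk(w+k\xi)^2$. Writing $y:=w+k\xi$, I would use that for every $\rho>0$,
\[
-\omega(A-qy)^2 \le \rho A^2 - \frac{\omega\rho}{\omega+\rho}q^2 y^2 ,
\]
which holds because the difference of the two sides equals $\big(\sqrt{\omega+\rho}\,A-\tfrac{\omega}{\sqrt{\omega+\rho}}qy\big)^2\ge0$. Choosing $\rho=\rho(r)$ so that $\frac{\omega\rho(r)}{\omega+\rho(r)}=\theta(r)$ turns the velocity-gain term into $-q^2\theta(r)y^2$, which together with the $qk\,y^2$ above gives precisely $-q(q\theta(r)-k)(w+k\xi)^2$. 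It then remains to absorb $\rho(r)A^2$ into the available dissipation, i.e. to prove
\[
\rho(r)A^2 \le \frac{\mu\delta Q_1(r)}{2H_{\max}}\frac{Q_2(V)}{Q_1(V)}\int_0^L hv_x^2\,dx + \frac{\mu g}{4}\|h_x\|_2^2,
\]
since the first summand is exactly the gap between $-\delta\mu$ and the claimed velocity coefficient, and the second is the gap between $-\mu g$ and $-\tfrac34\mu g$.

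To establish this I would split $A=A_1+A_2$ with $A_1:=(\delta+1)\int_0^L hv\,dx$ and $A_2:=\mu(h(L)-h(0))$. By the Cauchy--Schwarz inequality with weight $h$, the bounds $Q_1(V)\le h\le Q_2(V)$ from Lemma \ref{lemma1}, and the Poincar\'e inequality $\|v\|_2^2\le\frac{L^2}{\pi^2}\|v_x\|_2^2$ (valid since $v(0)=v(L)=0$), one gets $A_1^2\le(\delta+1)^2 m\,\frac{Q_2(V)L^2}{\pi^2 Q_1(V)}\int_0^L hv_x^2\,dx$, while $A_2^2\le\mu^2 L\|h_x\|_2^2$. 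The two pieces must be recombined with a tuned weight: using $(A_1+A_2)^2\le(1+\tau)A_1^2+(1+\tau^{-1})A_2^2$ and choosing $\tau$ so that both resulting coefficients saturate their budgets simultaneously forces $\rho(r)$ to equal the harmonic mean of $\rho_1:=\frac{\mu\delta\pi^2 Q_1(r)}{4H_{\max}(\delta+1)^2 mL^2}$ and $\rho_2:=\frac{g}{8\mu L}$; a direct computation then verifies that $\frac{\omega\rho(r)}{\omega+\rho(r)}$ coincides with $\theta(r)$ of \eqref{GrindEQ__33_}. The $-qk^3\xi^2$ term and the third-order surface-tension term carry over unchanged, yielding \eqref{GrindEQ__41_}.

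The main obstacle is exactly this balancing. A crude bound $A^2\le2A_1^2+2A_2^2$ would only permit $\rho(r)=\min(\rho_1,\rho_2)$, giving a strictly weaker $\theta(r)$; recovering the sharp harmonic-mean value encoded in \eqref{GrindEQ__33_} requires the weighted splitting with the optimal $\tau$. A secondary point is that the velocity coefficient is deliberately written with the fixed constant $Q_1(r)$ rather than $Q_1(V(t))$: this is what makes the factor $Q_2(V)/Q_1(V)$ cancel in $\rho_1$, so that $\theta(r)$ depends only on $r$ and not on the instantaneous value of $V(t)$.
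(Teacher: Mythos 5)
Your proposal is correct and follows essentially the same route as the paper: the paper assembles $\frac{d}{dt}V$ from Lemma \ref{lemma2}, invokes the procedure of Lemma 2.14 in \cite{15} (with zero friction) for precisely your completion-of-squares step $-\omega P^{2}\le \rho(r)A^{2}-q^{2}\theta(r)(w+k\xi)^{2}$ with the weighted splitting of $A$ absorbed into the $\|h_x\|_2^2$ and $\int_0^L h v_x^2\,dx$ dissipation, and then applies Wirtinger's inequality with \eqref{GrindEQ__27_} exactly as you do. Your explicit verification that the harmonic-mean choice of $\rho(r)$ reproduces $\theta(r)$ in \eqref{GrindEQ__33_}, and your observation about why $Q_1(r)$ rather than $Q_1(V(t))$ appears, simply make explicit the details the paper defers to the citation.
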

\begin{proof}
Let $\omega ,k,q,\delta >0$ be given constants and let $r\in [0,R)$ be a constant, where $R>0$ is defined by \eqref{GrindEQ__26_}. In addition to that we consider a classical solution of the system \eqref{GrindEQ__6_}-\eqref{GrindEQ__9_}, \eqref{GrindEQ__10_}, \eqref{GrindEQ__12_} and \eqref{GrindEQ__34_} at a time $t>0$ for which $V(\xi (t),w(t),h[t],v[t])<R$. Using Lemma \ref{lemma2}, \eqref{GrindEQ__36_}, \eqref{GrindEQ__37_} and definition \eqref{GrindEQ__18_} and by following the same procedure as in the proof of Lemma 2.14 in \cite{15} by assuming zero friction coefficient,  we establish the following inequality:
\begin{gather}  
\frac{d}{dt} V(\xi (t),w(t),h[t],v[t]) 
\le  -\frac{3\mu g}{4} \left\| h_{x} [t]\right\| _{2}^{2} 
-\mu \delta \int _{0}^{L}h(t,x) v_{x}^{2} (t,x)dx \nonumber  \\ 
-\mu \sigma \int _{0}^{L}\frac{h_{xx}^{2} (t,x)}{\left(1+h_{x}^{2} (t,x)\right)^{3/2} } dx 
-q\left(q\theta (r)-k\right)\left(w(t)+k\xi (t)\right)^{2}   
-qk^{3} \xi ^{2} (t) \nonumber \\
+\frac{\mu \delta \pi ^{2} Q_{1} (r)}{2L^{2} H_{\max } } \int _{0}^{L}h(t,x)v^{2} (t,x)dx \label{GrindEQ__69_} 
\end{gather} 
Since $v(t,0)=v(t,L)=0$ (recall \eqref{GrindEQ__9_}), by virtue of Wirtinger's inequality and \eqref{GrindEQ__27_}, we get:
\begin{align} \label{GrindEQ__70_} 
\left\| v[t]\right\| _{2}^{2} \le \frac{L^{2} }{\pi ^{2} } \left\| v_{x} [t]\right\| _{2}^{2} \le \frac{L^{2} }{\pi ^{2} Q_{1} \left(V(t)\right)} \int _{0}^{L}h(t,x)v_{x}^{2} (t,x)dx   
\end{align} 
Combining \eqref{GrindEQ__27_}, \eqref{GrindEQ__69_} and \eqref{GrindEQ__70_}, we obtain \eqref{GrindEQ__41_}. The proof is complete. 
\end{proof}
We can now present the proof of Theorem \ref{theorem1}.
\begin{proof}[Proof of Theorem \ref{theorem1}]
Let constants $\omega ,q,k,\delta >0$ satisfying \eqref{GrindEQ__32_}. Let constant $r\in [0,R)$ be given. Consider a classical solution of the system \eqref{GrindEQ__6_}-\eqref{GrindEQ__9_}, \eqref{GrindEQ__10_}, \eqref{GrindEQ__12_} with \eqref{GrindEQ__34_} that satisfies $V\left(\xi (0),w(0),h[0],v[0]\right)\le r$. Let $\overline{r}\in \left(r,R\right)$ be a constant that satisfies:
\begin{equation} \label{GrindEQ__71_} 
\frac{Q_{2} \left(\overline{r}\right)}{Q_{1} \left(\overline{r}\right)} <\frac{2H_{\max } }{Q_{1} (r)}  
\end{equation} 
The existence of $\bar{r}\in \left(r,R\right)$  is a direct consequence of the continuity of the functions involved in \eqref{GrindEQ__71_}. 

Due to \eqref{GrindEQ__32_}, Lemma \ref{lemma5},  \eqref{GrindEQ__41_} and \eqref{GrindEQ__71_} the following implication is true:
\begin{equation} \label{GrindEQ__72_}
\begin{array}{l}
\textrm{If } t>0 \textrm{ and } V\left(\xi (t),w(t),h[t],v[t]\right)\le \overline{r} \textrm{ then } 
\displaystyle{\frac{d}{d\, t} V\left(\xi (t),w(t),h[t],v[t]\right)\le 0 }
\end{array} 
\end{equation} 
A contradiction argument as in the proof of Theorem 2.6 in \cite{15} implies that $V\left(\xi (t),w(t),\right.$ $\left. h[t],v[t]\right)\le \overline{r}$ for all $t\ge 0$. 

Implication \eqref{GrindEQ__72_} and the fact $V\left(\xi (t),w(t),h[t],v[t]\right)\le \overline{r}$ for all $t\ge 0$ imply that 
\begin{equation}
\frac{d}{d\, t} V\left(\xi (t),w(t),h[t],v[t]\right)\le 0 \textrm{ for all } t>0
\end{equation} 
Due to the above and the continuity of the mapping $t\to V(\xi (t),w(t),h[t],$ $v[t])$, we get that 
\begin{align} \label{GrindEQ__73_}
V(\xi (t),w(t),h[t],v[t]) \le V(\xi (0),w(0),h[0],v[0]) \le r<R, \textrm{for all } t\ge 0 
\end{align}
Consequently, $\left(\xi (t),w(t),h[t],v[t]\right)\in X_{V} (r)$ for all $t\ge 0$ (recall \eqref{GrindEQ__29_}). Using \eqref{GrindEQ__73_} and Lemma \ref{lemma5}, we conclude that \eqref{GrindEQ__41_} holds for all $t>0$. Using \eqref{GrindEQ__73_}, \eqref{GrindEQ__41_} and the fact that $Q_{2} :\mathbb{R} _{+} \to \mathbb{R} $ is an increasing function while $Q_{1} :\mathbb{R}_{+} \to \mathbb{R} $ is a decreasing function, we obtain the following estimate for $t>0$
\begin{gather} 
\frac{d}{dt} V(\xi (t),w(t),h[t],v[t])  \nonumber \\
\le -\beta (r) \biggl (\left\| h_{x} [t]\right\| _{2}^{2} +\int _{0}^{L}h(t,x)v_{x}^{2} (t,x)dx  
+\int _{0}^{L}\frac{h_{xx}^{2} (t,x)}{\left(1+h_{x}^{2} (t,x)\right)^{3/2} } dx \nonumber \\
+\xi ^{2} (t) 
+\left(w(t)+k\xi (t)\right)^{2}  \biggl  )  \label{GrindEQ__74_}
\end{gather}
where
\begin{equation} \label{GrindEQ__75_} 
\begin{array}{l}
\beta (r):=
\displaystyle{\min \biggl (\frac{3\mu g}{4} ,\frac{\mu \delta \left(2H_{\max } -Q_{2} \left(r\right)\right)}{2H_{\max } } , qk^{3} ,q\left(q\theta (r)-k\right),\mu \sigma \biggl )} 
\end{array}
\end{equation} 
Notice that \eqref{GrindEQ__32_} and the fact that $r\in [0,R)$ in conjunction with definitions \eqref{GrindEQ__24_b}, \eqref{GrindEQ__26_}, \eqref{GrindEQ__74_} imply that $\beta (r)>0$. It follows from Lemma \ref{lemma3}, \eqref{GrindEQ__38_}, the continuity of the mapping $t\to V(\xi (t),w(t),h[t],v[t])$, (recall that $v\in C^{0} \left(\mathbb{R} _{+} \right.$ $\left. ;H^{1} \left(0,L\right)\right)$, $h\in C^{1} \left(\mathbb{R}_{+} \times [0,L];(0,+\infty )\right)$ and $v\in C^{0} \left(\mathbb{R}_{+} \times [0,L]\right)$), estimates \eqref{GrindEQ__73_}, \eqref{GrindEQ__74_}, Lemma \ref{lemma4}, \eqref{GrindEQ__39_} and \eqref{GrindEQ__40_} that the following estimate holds for all $t\ge 0$:
\begin{gather}  
\left\| (\xi (t),w(t),h[t]-h^{*} \chi _{[0,L]} ,v[t])\right\| _{X}^{2}  
 \nonumber \\ 
\le \Omega (r)  
\exp \left(-\frac{\beta (r)\, t}{\Lambda \left(r\right)} \right) \left\| (\xi (0),w(0),h[0]-h^{*} \chi _{[0,L]} ,v[0])\right\| _{X}^{2} \label{GrindEQ__79_}   
\end{gather} 
with
\begin{equation} \label{GrindEQ__80_} 
\Omega (r):=G_{1} \left(r\right)G_{2} \left(r\right) 
\end{equation} 
where $\Lambda $ is the non-decreasing function involved in \eqref{GrindEQ__38_} and $G_{i} :\left[0,R\right)\to \left(0,+\infty \right)$ $(i=1,2)$ are the non-decreasing functions involved in \eqref{GrindEQ__39_}, \eqref{GrindEQ__40_}. Estimate \eqref{GrindEQ__35_} with $M=\sqrt{\Omega (r)} $ and $\displaystyle{\lambda =\frac{\beta (r)}{2\Lambda (r)} }$ is a consequence of estimate \eqref{GrindEQ__79_}. The proof is complete. 
\end{proof}

\section{Concluding Remarks} \label{section5}
In this work we managed to show that the robust with respect to wall friction nonlinear feedback law proposed in \cite{15} provides also robust stabilization results with respect to surface tension. This shows even more the significance of the CLFs as stabilizing tools for the infinite-dimensional case of systems described by PDEs  and illustrates the fact that robustness is inherent in the CLF methodology.

The present study deals with the case of viscous Saint-Venant system with surface tension and without wall friction. It is of interest to study the more challenging problem of the viscous Saint-Venant system with surface tension and with wall friction as well as the construction of an additional functional which provides a bound for the sup-norm of the fluid velocity. In addition to that, other topics for future research are the study of existence and uniqueness of the solutions for the closed-loop system, the study of the problem with non constant (dynamic) contact angles, the study of the output feedback stabilization problem, the construction of appropriate numerical schemes and the derivation of stability estimates in stronger spatial norms. Concerning the output feedback stabilization problem there are many interesting studies in the literature that may contribute, such as \cite{Y2} which suggests a finite-dimensional observer control of the (1-D) heat equation under Neumann actuation.

\end{document}